\def\r{\mathbb{R}}
\def\s{\mathbb{S}}
\titleformat{\subsection}[runin]
{\bfseries} {\thesubsection{.}}{0.15cm}{}[.]
\titleformat{\subsubsection}[runin]
{\em}{\thesubsubsection{.}}{0.15cm}{}[.]
\newtheorem{theorem}{Theorem}[section]
\newtheorem{proposition}[theorem]{Proposition}
\newtheorem{lemma}[theorem]{Lemma}
\newtheorem{remark}[theorem]{Remark}
\theoremstyle{definition}
\numberwithin{equation}{section}
\numberwithin{figure}{section}
\begin{document}

\fancyhead[CO]{Stable capillary hypersurfaces} 
\fancyhead[CE]{A. Ainouz$\,$  and$\,$ R. Souam} 
\fancyhead[RO,LE]{\thepage} 

\thispagestyle{empty}

\vspace*{1cm}
\begin{center}
{\bf\LARGE Stable capillary hypersurfaces in a half-space or  a slab}

\vspace*{0.5cm}

{\large\bf Abdelhamid Ainouz $\;$ and$\;$ Rabah Souam}
\end{center}


\noindent A. Ainouz

\noindent Department of Mathematics,
University of Sciences and Technology Houari Boumedienne
Po Box 32 El Alia,
Bab Ezzouar 16111
Algiers, Algeria

\noindent e-mail: {\tt aainouz@usthb.dz}

\vspace*{0.1cm}

\noindent R. Souam

\noindent Institut de Math\'{e}matiques de Jussieu-Paris Rive Gauche,   UMR 7586, B\^{a}timent Sophie Germain,  Case 7012, 75205  Paris Cedex 13, France.

\noindent e-mail: {\tt rabah.souam@imj-prg.fr}

\vspace*{0.1cm}

\vspace*{1cm}

\begin{quote}
{\small
\noindent {\bf Abstract}\hspace*{0.1cm}\vspace*{0.2cm} 
We study stable immersed capillary hypersurfaces in a domain $\mathcal B$ which is either a half-space or a  slab in the Euclidean space $\r^{n+1}.$ We prove that such a hypersurface $\Sigma$ is rotationally symmetric  in the following cases: 
\begin{enumerate}
\item $n=2$, $\mathcal B$ is a slab  and $\Sigma$ has genus zero,
\item $n\geq 2$, $\mathcal B$ is a slab, the angle of contact is $\pi/2$ and each component of $\partial\Sigma$ is embedded,
\item $n\geq 2,$ $\mathcal B$ is a half-space, the angle of contact is $<\pi/2$ and each component of  $\partial\Sigma$ is embedded.
\end{enumerate}
Moreover, in case (2), if not a right circular cylinder, $\Sigma$  has to be graphical over a domain in $\partial\mathcal B.$ In case (3), $\Sigma$ is a spherical cap.

\vspace*{0.2cm}

\noindent{\bf Keywords}\hspace*{0.1cm} Capillary hypersurfaces, constant mean curvature hypersurfaces, stability.

\vspace*{0.2cm}

\noindent{\bf Mathematics Subject Classification (2010)}\hspace*{0.1cm} 53A10, 49Q10, 53C42, 76B45.}

\end{quote}


\section{Introduction}\label{sec:intro}

Consider a closed domain $\mathcal B$ with smooth boundary in the Euclidean space $\r^{n+1}$. A capillary hypersurface in $\mathcal B$ is a  compact CMC hypersurface (i.e  with constant mean curvature) with non-empty boundary, which is contained in $\mathcal B$
and   meets  the frontier $\partial \mathcal B$  at a constant angle  along its boundary. When the angle of contact  is $\pi/2$, that is, when the hypersurface  is orthogonal to $\partial \mathcal B$, it is said to be a CMC hypersurface with free boundary. 

Capillary surfaces  in $\r^3$ model incompressible liquids inside  containers in the absence of gravity. Indeed, 
the free surface  of the liquid (locally) minimizes   an energy functional under a volume constraint. More precisely, given an angle $\theta\in(0,\pi),$ for a compact surface $\Sigma$ inside $\mathcal B$ such that $\partial \Sigma \subset \partial \mathcal B$ and $\partial \Sigma$ bounds a compact domain $W$ in
$\partial \mathcal B,$ the energy of $\Sigma$ is  by definition the quantity
\[
\mathcal E(\Sigma):= {\rm Area} (\Sigma) -\cos\theta \,  {\rm Area (W)}.
\]
The stationary surfaces of $\mathcal E$ for variations preserving the enclosed volume are precisely the 
CMC surfaces  which make a constant angle $\theta$ with  $\partial \mathcal B.$ In the physical interpretation, $\Sigma$ represents the liquid-air 
interface and $W$ the region of the container wetted by the liquid and $c:=\cos\theta$ is a physical constant. For more information  on capillary surfaces we refer to  \cite{Fi} and \cite{langbein}.

From the geometrical viewpoint, capillary hypersurfaces  raise several interesting questions. Given a domain $\mathcal B\subset \r^{n+1}$, one would like to know whether there exist immersed capillary hypersurfaces in $\mathcal B$ of a given topology and 
to characterize, in particular when $\mathcal B$ has symmetries, the embedded ones. These are quite difficult problems in general. For instance, even in the case of the unit ball in $\r^3$, these issues are still not completely understood. They  are related to problems in spectral geometry when the surfaces are minimal \cite{fraser-schoen, souam}. In a pioneering work, Nitsche \cite{nitsche} has shown that an immersed capillary disk in the unit ball in $\r^3$ has to be a spherical cap or a flat disk. Recently, Fraser and Scheon \cite{fraser-schoen} proved the existence of embedded minimal surfaces of genus zero with free boundary in the unit ball of $\r^3$ with arbitrarily many boundary components. In the case of a closed slab or a closed half-space of $\r^{n+1},$ one can apply Alexandrov's reflection technique to 
prove that in the embedded case, capillary hypersurfaces have rotational symmetry around an axis orthogonal to the boundary of the domain \cite{wente}. Wente \cite{We}  has also proven   the existence of immersed, non embedded, capillary surfaces of annular type in a closed slab and in the unit ball in $\r^3$ which are not pieces of Delaunay surfaces. In the case of a wedge in $\r^3,$ it has been shown that pieces of spheres are the only embedded capillary surfaces of annular type \cite{Pa, Mc}. Recently, in \cite{alarcon-souam}, it was shown there exists  a large family  of embedded capillary surfaces in  convex polyhedral domains in $\r^3.$ The surfaces have genus zero and one boundary component lying on each face of the polyhedra.   

Motivated by the physical interpretation, it is natural to restrict one's attention to  the capillary hypersurfaces  which are stable, that is, those which minimize the energy $\mathcal E$ up to second order under the volume constraint. This has been addressed for the case of the unit ball  \cite{li-xiong, ros-vergasta, ros-souam} and for the case of a wedge or a half-space in $\r^{n+1}$ 
 \cite{choe-koiso, marinov}, and also for more general convex bodies \cite{ros1, ros-vergasta}.

In this paper we will be  concerned  with  stable immersed capillary hypersurfaces in a domain $\mathcal B$ which is either a closed slab or a closed half-space in $\r^{n+1}.$ For simplicity, we will refer to the latter as slabs and hyperplanes, meaning implicitly they are closed. We will prove that such a hypersurface  is  rotationally invariant around an axis  orthogonal to 
$\partial\mathcal B$  in the following cases:
\begin{enumerate} 
\item $n=2,$ $\mathcal B$ is a slab and the surface has genus zero (Theorem \ref{th:stable genus 0}),
\item $n\geq 3,$ $\mathcal B$ is a slab, the hypersurface has free boundary, that is, the angle of contact is  $\pi/2$  and each boundary 
component of the hypersurface is embedded . Furthermore, if not a right circular cylinder, the hypersurface has to be a graph over a domain in $\mathcal B$ (Theorem \ref{thm:free boundary}),
\item $n\geq 3,$ $\mathcal B$ is a half-space, the angle of contact either is $\pi/2$, or is $<\pi/2$  and each boundary 
component of the hypersurface is embedded. Moreover the hypersurface is then a spherical cap (Theorem \ref{thm:halfspace}).
\end{enumerate}


\newpage

\section{Preliminaries}\label{sec:prelim}

\subsection{The variational problem and stability}\label{sub:variational}

Consider a domain $\mathcal B$  with smooth boundary in an oriented Riemannian 
manifold $(M, \langle,\rangle)$ of dimension $n+1$. We let $\bar N$ denote the unit normal to $\partial \mathcal B$ 
pointing outwards $\mathcal B.$ The orientation of $M$ induces an orientation on $\partial \mathcal B$ in the usual way:  a local orthonormal frame $\{\epsilon_1,\ldots,\epsilon_n\}$ on $\partial \mathcal B$ is positively oriented if the local frame 
$\{\bar N, \epsilon_1,\ldots,\epsilon_n\}$ has positive orientation in $M.$ 

  In all what follows,  unless otherwise stated,  $\Sigma$ will denote a compact  orientable smooth manifold of dimension $n$ with  non empty boundary, $\partial \Sigma,$ and   $\psi:  \Sigma \to \mathcal B$ an immersion which is
  smooth on the interior of $\Sigma$ and of class $\mathcal C^2$ up to the boundary, and which is {\it proper}, that is, verifying  $\psi(\text{ int}\, \Sigma)\subset \text {int}\, \mathcal B$ and $\psi(\partial\Sigma)\subset\partial \mathcal B.$
  
  Fix a global unit normal $N$ to $\Sigma$ along $\psi.$ This determines an orientation on $\Sigma$ as above.
Denote by $\nu$ the exterior unit normal to $\partial \Sigma$ in $\Sigma$. Again this induces an orientation on 
$\partial \Sigma$. 
Let now  $\bar \nu$ be the unit normal to $\partial \Sigma$ in $\partial \mathcal B$ compatible with this orientation. Otherwise said, $\bar\nu$ is such that the 
the bases $\{N,\nu\}$ and $\{\bar N, \bar\nu\}$ have the same orientation in $(T\partial\Sigma)^\perp.$

By a variation of the immersion $\psi$ we mean a differentiable map $\Psi: (-\epsilon,\epsilon) \times \Sigma \to M$ such that $\psi_t:\Sigma\to M, t\in (-\epsilon,\epsilon),$ defined by $\psi_t(p)=\Psi(t,p), p\in\Sigma,$ is an immersion  for each $t\in(-\epsilon, \epsilon)$ and $\psi_0=\psi.$  

If $\psi$ is a proper immersion into $\mathcal B,$ then  a variation $\Psi$ is said to be admissible if   $\psi_t$ is a proper immersion into $\mathcal B$ for each  for each $t\in(-\epsilon, \epsilon).$
 In the sequel we will consider only admissible variations.
 
We let $\xi(p)= \frac{\partial \Psi}{\partial t}(0,p), p\in \Sigma,$  be the variation vector field of $\Psi.$  The {\it  volume function} $V:(-\epsilon,\epsilon)\to \r$ is by definition
\[ 
V(t)=\int_{[0,t]\times\Sigma} \Psi^\ast \Omega,
\]
where $\Omega$ is the  volume form of $M.$ The deformation is {\it volume-preserving} if $V(t)=0$ for each $t\in(-\epsilon,\epsilon).$ 

For $t\in(-\epsilon, \epsilon),$ we  denote by $A(t)$  the $n-$dimensional volume of 
$\Sigma$ for the metric induced by $\psi_t$. We also consider the {\it wetted area function} $ W(t):(-\epsilon,\epsilon)\to \r$ defined by
\[ 
 W(t)=\int_{[0,t]\times\partial\Sigma} \Psi^\ast \omega,
\]
where $\omega$ is the volume  form  of $\partial \mathcal B.$

Let  $\theta\in (0,\pi)$ be a given number, the energy functional $\mathcal E: (-\epsilon, \epsilon)\to \r$ is defined as follows
\[
\mathcal E(t)=  A(t) -\cos\theta \, W(t).
\]

 The first variation formulae for the energy and volume functions are as follows (cf. \cite{barbosa-do carmo})
 
 \begin{equation}\label{energyvariation}
  \mathcal E'(0)=-n\int_{\Sigma} H f  \,d\Sigma +\int_{\partial \Sigma} \langle \xi, \nu-\cos\theta\, \bar\nu\rangle \,d(\partial\Sigma)
  \end{equation}
 \begin{equation}\label{volumevariation}
 V'(0)=\int_{\Sigma} f \,d\Sigma.
\end{equation}

 Here $d\Sigma$ (resp. $d({\partial \Sigma})$) denotes the $n-$volume element  on 
 $\Sigma$  (resp. the $(n-1)-$volume element on ${\partial \Sigma}$), $H$ is the mean curvature of the immersion 
 $\psi$ computed with respect to the unit normal $N$ and $f=\langle \xi, N\rangle.$ 
 
Extending a result in \cite{barbosa et al}, we have the following infinitesimal characterization 
of admissible volume preserving variations. 

 \begin{proposition}\label{prop:deformations}
 Assume the proper immersion  $\psi$  into  $\mathcal B$  is transversal to $\partial\mathcal B$. Let $f$ be a smooth function on $\Sigma$ satisfying $\int_{\Sigma} f d\Sigma =0.$ Then there exists an admissible volume-preserving variation $\Psi$ of $\psi$ in $\mathcal B$ such that $f=\langle \frac{\partial\Psi}{\partial t}|_{0},N\rangle$. 
 \end{proposition}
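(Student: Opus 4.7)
The plan mirrors the closed-hypersurface argument of \cite{barbosa et al}: first produce \emph{some} admissible variation whose normal component at $t=0$ equals the prescribed $f$, then cancel the resulting volume variation by means of the implicit function theorem. The novelty lies only in arranging admissibility. For this, I would first construct a section $X$ of $\psi^{\ast}TM$ of the form $X=fN+Y$ with $Y$ tangent to $\Sigma$, subject to the requirement $\langle X,\bar N\rangle=0$ along $\partial\Sigma$. Since $T\partial\Sigma\perp\bar N$, the $\bar N$-component of any vector in $T_{p}\Sigma$ at $p\in\partial\Sigma$ involves only its $\nu$-component, and transversality of $\psi$ to $\partial\mathcal{B}$ guarantees $\langle\nu,\bar N\rangle\neq 0$ on $\partial\Sigma$. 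Setting
\[
Y|_{\partial\Sigma}=-\frac{f\,\langle N,\bar N\rangle}{\langle\nu,\bar N\rangle}\,\nu
\]
and extending smoothly into $\Sigma$ via a collar yields the desired $X$. Next extend $X$ to a smooth vector field $\tilde X$ on $M$ which is tangent to $\partial\mathcal{B}$ everywhere on $\partial\mathcal{B}$ — decompose a rough extension in tubular coordinates of $\partial\mathcal{B}$ and kill its $\bar N$-component by a cutoff vanishing on $\partial\mathcal{B}$, which is consistent because $X|_{\partial\Sigma}$ is already tangent to $\partial\mathcal{B}$. The flow $\phi_{t}^{\tilde X}$ preserves $\partial\mathcal{B}$, so $\Psi^{f}(t,p):=\phi_{t}^{\tilde X}(\psi(p))$ is an admissible variation of $\psi$ with normal component $f$ at $t=0$.

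To enforce the volume constraint, fix any $h\in\mathcal{C}^{\infty}(\Sigma)$ with $\int_{\Sigma}h\,d\Sigma\neq 0$ and repeat the construction to obtain, via the flow of a vector field $\tilde W$ on $M$ tangent to $\partial\mathcal{B}$, an admissible variation with normal component $h$ at $t=0$. Consider the admissible two-parameter variation $\Phi(t,s,p):=\phi_{t}^{\tilde X}\circ\phi_{s}^{\tilde W}(\psi(p))$, whose $t$- and $s$-slices both remain in $\partial\mathcal{B}$ on $\partial\Sigma$. By \eqref{volumevariation} its volume function satisfies $\partial_{t}V(0,0)=\int_{\Sigma}f\,d\Sigma=0$ and $\partial_{s}V(0,0)=\int_{\Sigma}h\,d\Sigma\neq 0$, so the implicit function theorem furnishes a smooth $s(t)$ with $s(0)=0$, $s'(0)=0$ and $V(t,s(t))\equiv 0$. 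Then $\Psi(t,p):=\Phi(t,s(t),p)$ is an admissible, volume-preserving variation whose normal component at $t=0$ is $f+s'(0)h=f$, as required.

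The main obstacle is the first step — producing, under only the transversality hypothesis, an admissible deformation whose normal component is exactly the prescribed $f$, particularly when $\psi$ is a non-embedded immersion, in which case the extension of $X$ to $\tilde X$ cannot be carried out directly on the image and must instead be built locally on $\Sigma$ using coordinate neighborhoods transverse to $\partial\mathcal{B}$ and patched by a partition of unity. Once admissibility is secured, the passage from arbitrary $f$ with $\int_{\Sigma}f\,d\Sigma=0$ to a bona fide volume-preserving variation is the standard Barbosa--do Carmo trick and presents no further difficulty.
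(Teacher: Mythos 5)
Your overall strategy is sound, and the two key ideas are the right ones: (a) use transversality to tilt $fN$ by a tangential correction along $\partial\Sigma$ so that the variation field becomes tangent to $\partial\mathcal{B}$ (your formula $Y|_{\partial\Sigma}=-f\langle N,\bar N\rangle\langle\nu,\bar N\rangle^{-1}\nu$ is correct, and $\langle\nu,\bar N\rangle\neq 0$ is indeed equivalent to transversality, since $T(\partial\Sigma)\subset T(\partial\mathcal{B})$ automatically); and (b) restore the volume constraint afterwards. Where you genuinely diverge from the paper is in (b): you run the original Barbosa--do Carmo two-parameter/implicit-function-theorem argument, producing $s(t)$ with $V(t,s(t))\equiv 0$, whereas the paper follows the Barbosa--do Carmo--Eschenburg variant: it takes the flow of a single field $\mathcal Z$ with $\langle\mathcal Z,N\rangle=1$, tangent to the boundary, and reparametrizes it by a function $u(t,p)$ solving $\partial_t u=f/E(u(t,p),p)$, which makes the volume function vanish identically by construction, with no implicit function theorem needed. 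Both are legitimate; your route requires the extra (standard, but worth stating) verification that the two-parameter volume function is well defined and that its restriction to the graph of $s$ equals the volume function of the composed variation --- this uses $d\Omega=0$ together with the fact that the swept region $[0,t]\times[0,s]\times\partial\Sigma$ maps into the $n$-dimensional $\partial\mathcal{B}$ and so contributes nothing to the Stokes identity.

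The one real soft spot is the point you flag yourself: for a non-embedded immersion, extending $X$ to an ambient field $\tilde X$ tangent to $\partial\mathcal{B}$ does not make sense on the image, and ``patch by a partition of unity'' is not yet a proof --- a partition-of-unity combination of local flows is no longer a flow, so the assertions that ``$\phi^{\tilde X}_t$ preserves $\partial\mathcal{B}$'' and that $\psi_t(\mathrm{int}\,\Sigma)\subset\mathrm{int}\,\mathcal{B}$ must be re-established by hand (in a Euclidean half-space or slab one can rescue this using convexity of $\mathcal{B}$ and affineness of $\partial\mathcal{B}$, but the proposition is stated for a general ambient manifold). The paper's device for exactly this difficulty is worth adopting: choose an admissible variation $F:(-\epsilon,\epsilon)\times\Sigma\to M$ which is a local diffeomorphism, pull the ambient metric back to $\widetilde\Sigma=(-\epsilon,\epsilon)\times\Sigma$, carry out the whole construction for the embedded hypersurface $\{0\}\times\Sigma$ inside $\widetilde\Sigma$ (where a genuine boundary-tangent vector field and its flow are available), and push forward by $F$; normal components and volume functions are preserved under this correspondence. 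With that substitution your argument closes.
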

 
 \begin{proof} 
 
 Start with an admissible variation $F: (-\epsilon,\epsilon)\times \Sigma\to \mathcal B, \epsilon>0,$ of the immersion $\psi$ 
such that  $F$ is a local diffeomorphism. An example of such a variation can be constructed as follows. Consider an open
neighborhood $W$ of $\psi(\partial \Sigma)$ in $M$ which is diffeomorphic to a product $(-\delta,\delta)\times U,$ where $U\subset \partial\mathcal B$ is an open  relatively compact neighborhood of $\psi(\partial\Sigma)$.  Endow $W$ with the product 
metric and extend it to a Riemannian metric on $M$. For this new metric, $U$ is totally geodesic. It suffices to take the mapping $F: (-\epsilon,\epsilon)\times \Sigma \to M$ defined for $\epsilon >0$ small enough and $(t,p)\in
(-\epsilon,\epsilon)\times \Sigma$ 
by 
 $F(t,p)={\text{exp}}_{\psi(p)} (tN(p)). $ Here exp is the exponential map and $N$  a unit normal field to $\Sigma$  for the new metric. 

We now endow the manifold $\widetilde\Sigma :=(-\epsilon,\epsilon)\times \Sigma$ with the pull-back metric $F^\ast \langle\,\, , \,\rangle$, of the original metric on $M$, which we denote also by $\langle\,\, , \,\rangle$. It is enough to prove the result for the hypersurface $\{0\}\times \Sigma$ ,
 in the domain $\widetilde\Sigma$ for this metric. Indeed, if $\Phi$ is an admissible volume-preserving  variation  of $\{0\}\times \Sigma$ in $\widetilde\Sigma$  then $F\circ\Phi$ is an admissible variation of $\psi$ in $\mathcal B.$ Furthermore if $N$ is a unit field normal to $\{0\}\times \Sigma$ in $\widetilde\Sigma $ then $dF(N)$ is a unit field normal to $\Sigma$ in $M$ and $\langle \frac{\partial (F\circ \Phi)}{\partial t}|_0,dF(N)\rangle= \langle dF(\frac{\partial \Phi}{\partial t}|_0), dF(N)\rangle=\langle \frac{\partial \Phi}{\partial t}|_0, N\rangle$.  Moreover,
 
 $$ \int_{[0,t]\times\Sigma} (F\circ\Phi)^\ast \Omega = \int_{[0,t]\times\Sigma} \Phi^\ast (F^\ast \Omega)= \int_{[0,t]\times\Sigma} \Phi^\ast d\widetilde\Sigma,$$
 so that the volume functions of the deformations $F\circ\Phi$ and $\Phi$ coincide and hence  $F\circ\Phi$ is  volume-preserving too. 
 
 We now prove the result in $\widetilde \Sigma$. For each point $p\in \partial\Sigma$, let $N_0(p)=N(p)-\langle N(p), \bar N (p)\rangle \bar N(p)$ be the projection of $N(p)$ on $T_p(\partial \widetilde\Sigma)$; note that $\partial \widetilde\Sigma=(-\epsilon,\epsilon)\times \partial\Sigma$. We now consider the 
 vector $w(p)= \frac{1}{\langle N(p), N_0(p)\rangle } N_0(p)-N(p)$ in $T_p(\{0\}\times \Sigma)$ which is well defined by the transversality assumption. 
 We can extend $w$ to a vector field on $\{0\}\times \Sigma$, still denoted $w.$ Set $z=w+N$, we can extend
 $z$, for instance in a trivial way using the product structure, to a vector field on 
 $\widetilde \Sigma$ which is tangent to $ \partial\widetilde \Sigma$ along $ \partial\widetilde \Sigma$. 
  Call  $\mathcal Z$ such an extension. By construction $\mathcal Z$ satisfies $\langle \mathcal Z, N\rangle =1$ on 
  $\{0\}\times \Sigma$. 
Let $(\phi_t)_{|t|<\delta}, \delta>0,$ denote the local flow of $\mathcal Z$ and consider  the map:
  $\Phi: (-\delta,\delta)\times \Sigma\to \widetilde\Sigma,$ defined by $\Phi(t,p)=\phi_t(p),$
   for $(t,p)\in  (-\delta,\delta)\times\Sigma$. Let now $u:(-\epsilon_0,\epsilon_0)\times \Sigma\to \widetilde \Sigma,\,\, \epsilon_0>0,$
   be a differentiable function and  define a variation $\mathcal X: (-\epsilon_0,\epsilon_0)\times \Sigma\to \widetilde \Sigma$ as follows:
    \[ (t,p)\to  \mathcal X(t,p)= \Phi(u(t,p),p),\qquad t\in (-\epsilon_0,\epsilon_0), \, p\in \Sigma
     \]
  We have $\frac{\partial\mathcal X}{\partial t}|_{0}=\frac{\partial u}{\partial t}|_{0}\, \mathcal Z$ and so 
 $\langle \frac{\partial\mathcal X}{\partial t}|_{0}, N\rangle =\frac{\partial u}{\partial t}|_{0} $.  Clearly $\mathcal X$ is an admissible variation. We will see that we can choose the function $u$ to fulfill the required properties. To compute the volume function of $\mathcal X$ we  write: $\mathcal X= \Phi\circ \Psi$ where $\Psi:(-\epsilon_0,\epsilon_0)\times \Sigma \to \widetilde \Sigma$ is such that $\Psi(t,p)= (u(t,p),p).$The volume function associated to $\mathcal X$ is 
  $$V(t)=\int_{[0,t]\times\Sigma} \mathcal X^\ast \Omega = \int_{[0,t]\times\Sigma} \Psi^\ast (\Phi^\ast \Omega).$$
  Set $\Phi^\ast \Omega= E\,  dt\wedge d\Sigma$ where $E:(-\epsilon_0,\epsilon_0)\times \Sigma \to \widetilde \Sigma$ is a smooth function. Then $\Psi^\ast (\Phi^\ast \Omega)(t,p)= E(u(t,p),p) \frac{\partial u}{\partial t} (t,p) \,dt\wedge d\Sigma$ and so
  $$V(t)= \int_{\Sigma} \left( \int_0^t E(u(t,p),p) \frac{\partial u}{\partial t} (t,p) dt\right) d\Sigma.$$
  Now, we choose $u$ to be the function which solves the following initial value problem:
  
  $$ \frac{\partial u}{\partial t} (t,p)=\frac{f(p)}{E(u(t,p),p)},\qquad
u(0,p)=0,\quad  \text{for each }p\in\Sigma.
$$
  Then 
  $$V(t)= 0, \,\, {\text {for all}} \, t\in(-\epsilon_0,\epsilon_0),$$
  so that $\mathcal X$ is volume-preserving. 
  
  To finish, let us we check that $\frac{\partial u}{\partial t}(0,p)=f(p)$ for all $p\in\Sigma.$ To see this, take a local orthonormal frame $\{\epsilon_1,\ldots,\epsilon_n\}$ in a neighborhood of $p\in\Sigma$, then 
  $E(0,p)=\Omega\left(d\Phi_{(0,p)}(\epsilon_1),\ldots , d\Phi_{(0,p)}(\epsilon_n),\frac{\partial \Phi}{\partial t}(0,p)\right)= \Omega\left(\epsilon_1,\ldots , \epsilon_n,\mathcal Z(p)\right)=\Omega\left(\epsilon_1,\ldots , \epsilon_n,w(p)+N(p)\right)=\Omega\left(\epsilon_1,\ldots , \epsilon_n,N(p)\right)=1,$ where we used the fact that 
  $\Phi(0,q)=q$ for all $q\in \Sigma$ and that $w(p)$ is tangent to $\Sigma$. 
  
   Thus $\frac{\partial u}{\partial t} (0,p)=f(p)$ 
  and so $\langle \frac{\partial\mathcal X}{\partial t}|_{0}, N\rangle = f$ as required.
   \end{proof}

The proper immersion $\psi:\Sigma\to\mathcal B$ is said to be a capillary   immersion into $\mathcal B$ if $\psi$ is a critical point of the  energy functional $\mathcal E$ for admissible volume-preserving 
 variations.  It follows, from formulae (\ref{energyvariation}) and (\ref{volumevariation})  that  $\psi$ is capillary if and only if it has constant mean curvature and the angle $\in[0,\pi] $ determined by $\nu$ and $\bar \nu,$ called the angle of contact of $\Sigma$ with $\partial \mathcal B,$ is constant and equal to $\theta$ along $\partial \Sigma.$ This amounts to the same to saying that the angle between $N$ 
 and $\bar N$ is constant and equal to $\theta.$  Note that the value $\theta$ of the angle of contact depends on the chosen  unit normal $N$.  With the opposite choice, $-N$,  the angle would be  $\pi-\theta.$ 
 
 {\bf Convention.} {\it In the sequel we will always take the unit normal $N$ so that the (constant) mean curvature $H$ is $\geq 0.$ So, when 
 $H$ is not 0, we orient the hypersurface by its mean curvature vector $\vec H$ and the contact angle is the one between $\vec H$ and the exterior unit normal $\bar N$ to $\partial\mathcal B.$}
 
Given a function $f\in\mathcal C^{\infty}(\Sigma)$ satisfying $\int_{\Sigma} f\,d\Sigma=0$ and an admissible volume-preserving variation $\Psi$ with
$f=\langle \frac{\partial\Psi}{\partial t}|_{0},N\rangle $, we have (cf. \cite{ros-souam})
 \[ \mathcal E^{''}(0)=  -\int_{\Sigma} f\left(\Delta f +(|\sigma|^2+\text{Ric}(N)) f\right)\,d\Sigma + \int_{\partial\Sigma} 
 f\left( \frac{\partial f}{\partial \nu} - q\, f\right)\, d(\partial\Sigma),
 \]
where $\Delta$ is the Laplacian for the metric induced by $\psi$ on $\Sigma$  and $\sigma$ is the second fundamental form of $\psi,$ Ric is the Ricci curvature of $M$ and  
\[ q= \frac{1}{\sin\theta} \text{II} (\bar\nu,\bar\nu)+ \cot\theta \,\sigma(\nu,\nu). 
\]
Here II denotes the second fundamental form of $\partial\mathcal B$ associated to the unit normal $-\bar N.$

A capillary immersion $\psi$ is called stable if $\mathcal E^{''}(0)\geq 0$ for all admissible volume-preserving variations. Let $\mathcal F=\{ f\in H^1(\Sigma), \int_{\Sigma} f\,d\Sigma =0\},$ where $H^1(\Sigma)$ is the first Sobolev space of $\Sigma.$ The index form $\mathcal I$ of $\psi$ is the symmetric bilinear form defined on $H^1(\Sigma)$ by
\[ \mathcal I (f,g)=\int_{\Sigma}\left( \langle \nabla f, \nabla g\rangle -(|\sigma|^2+\text{Ric}(N)) fg\right) d\Sigma-
 \int_{\partial\Sigma}q\, fg\,d(\partial\Sigma),
\]
where $\nabla$  stands for the gradient for the metric induced by $\psi.$ It follows from  Proposition \ref{prop:deformations} and a standard density argument that  $\psi$ is capillarily stable if and only if $\mathcal I(f,f)\geq 0$ for all $f\in\mathcal F.$ 

A function $f\in \mathcal F$ is said to be a Jacobi function of $\psi$ if it lies in the kernel of $\mathcal I$, that is, if 
$\mathcal I(f,g)=0$ for all $g\in \mathcal F.$ By standard arguments, this is equivalent to saying that $f\in \mathcal C^{\infty}(\Sigma)$ and satisfies
\begin{align*}
\Delta f+(|\sigma|^2+\text{Ric}(N)) f &= \text{constant \quad on }\, \Sigma\\
\frac{\partial f}{\partial \nu} &=q\,f\quad\,\, \quad\quad\text{on}\, \partial \Sigma
\end{align*}

More generally, we may assume that the contact  angle is constant along each component of $\partial \Sigma$. If $\Gamma_1,\dots,\Gamma_k$ denote the connected components of $\partial\Sigma$ and $\theta_1,\dots,\theta_k$ are given 
 angles in $(0,\pi)$, the capillary hypersurfaces in $\mathcal B$ with contact angle $\theta_i$ along $\Gamma_i$ for each $ i=1,\ldots,k$ are the critical points for admissible volume-preserving variations of the energy functional 
 
  \[
 \mathcal E(t) = A(t)-\sum_{i=1}^k \cos\theta_i\, W_i(t)
 \]
where, $W_i(t)$ denotes  the  wetted area function corresponding to $\Gamma_i, i=1,\ldots, k$.
The first and second variations formulae as well as  the previous discussion are valid, with obvious modifications, in this more general setting. 

A fact that we will use repeatedly is the following: 

\begin{lemma}\label{lem:umbilicity}
 Suppose that $\partial\mathcal B$ is totally umbilical.  Let $\psi$ be a capillary immersion into $\mathcal B.$ Then, the unit outwards normal $\nu$ along $\partial \Sigma$ in $\Sigma$ is a principal direction of $\psi.$ 
\end{lemma}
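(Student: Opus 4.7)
The plan is to differentiate the capillary (constant angle) condition along $\partial\Sigma$ and feed in the umbilicity of $\partial\mathcal{B}$ to conclude that the shape operator of $\psi$ preserves the line $\r\nu$. Recall that we must show $\sigma(\nu,X)=0$ for every $X$ tangent to $\partial\Sigma$. By symmetry of the second fundamental form this is the same as $\sigma(X,\nu)=-\langle\nabla^M_X N,\nu\rangle=0$, so the task reduces to controlling the tangential component of $\nabla^M_X N$.

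The capillary condition says $\langle N,\bar N\rangle=\cos\theta$ is constant on $\partial\Sigma$. Differentiating along any $X\in T(\partial\Sigma)$ I would get
\[
\langle \nabla^M_X N,\bar N\rangle \;=\; -\langle N,\nabla^M_X \bar N\rangle.
\]
Since $X$ is tangent to $\partial\mathcal{B}$ and $\partial\mathcal{B}$ is totally umbilical, its Weingarten operator is a scalar, so $\nabla^M_X\bar N=\lambda\, X$ for some function $\lambda$. Because $X\in T\Sigma$ and $N\perp T\Sigma$, this forces $\langle N,\nabla^M_X\bar N\rangle=\lambda\langle N,X\rangle=0$, and hence $\langle \nabla^M_X N,\bar N\rangle=0$.

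Since trivially $\langle \nabla^M_X N,N\rangle=0$, the vector $\nabla^M_X N$ is orthogonal to both $N$ and $\bar N$. As $\theta\in(0,\pi)$, the vectors $N$ and $\bar N$ are linearly independent and together span the $2$-plane $(T\partial\Sigma)^\perp$, so $\nabla^M_X N\in T(\partial\Sigma)$. In particular $\langle \nabla^M_X N,\nu\rangle=0$ because $\nu\perp T(\partial\Sigma)$. This is precisely $\sigma(\nu,X)=0$ for every $X\in T(\partial\Sigma)$, which means the shape operator of $\psi$ sends $\nu$ into the orthogonal complement of $T(\partial\Sigma)$ in $T\Sigma$, namely $\r\nu$. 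Thus $\nu$ is an eigenvector of the shape operator, i.e.\ a principal direction.

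The argument is quite short and I do not expect a genuine obstacle: the only substantive input besides the capillary identity is the umbilicity hypothesis, which is exactly what is needed to kill the would‑be $\bar\nu$‑component of $\nabla^M_X\bar N$ (the component that would otherwise spoil $\langle N,\nabla^M_X\bar N\rangle=0$). Everything else is bookkeeping with the two orthonormal frames $\{N,\nu\}$ and $\{\bar N,\bar\nu\}$ in $(T\partial\Sigma)^\perp$.
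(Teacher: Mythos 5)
Your proof is correct and follows essentially the same route as the paper: differentiate the constant contact angle condition along $\partial\Sigma$ and use the umbilicity of $\partial\mathcal B$ to show $\langle D_X N,\nu\rangle=0$ for $X\in T(\partial\Sigma)$. The only cosmetic difference is that you differentiate the scalar identity $\langle N,\bar N\rangle=\cos\theta$ and finish with a span argument in $(T\partial\Sigma)^\perp$, whereas the paper differentiates the vector decomposition $N=\cos\theta\,\bar N-\sin\theta\,\bar\nu$ and checks term by term (using umbilicity a second time to handle $D_X\bar\nu$), which your argument neatly avoids.
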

So, if $D$ denotes the Levi-Civita connection on the ambient manifold, we have
$$ D_{\nu}N=-\sigma(\nu,\nu)\,\nu.$$
\begin{proof}
It suffices to show that $\sigma(\nu,x)=0$ for any $x\in T(\partial\Sigma).$ Along $\partial\Sigma$ we have $N=\cos\theta\,\bar N-\sin\theta\, \bar\nu$, where $\bar\nu$ is the unit normal to $\partial\Sigma$ in $\partial\mathcal B$ introduced above. So: $D_{x} N=\cos\theta\,D_x{\bar N} -\sin\theta\,D_x{\bar \nu}.$ Let  II denote, as above,  the second fundamental form of $\partial\mathcal B$ associated to $-\bar N.$ By hypothesis, $D_x{\bar N}=\text{II}(x,x)\,x$ is orthogonal to $\nu$. Furthermore, denoting by $\nabla$ the Levi-Civita connection on $\partial\mathcal B,$
again since $\partial\mathcal B$ is totally umbilical: $D_{x}{\bar\nu}=-\text{II}(x,\bar\nu)\,\bar N+\nabla_x{\bar\nu}=\nabla_x{\bar\nu}$ is tangent to $\partial\Sigma$.
It follows that  $\sigma(\nu,x)=-\langle D_x N,\nu\rangle=0.$ 
\end{proof}

 From now on, we will take  the ambient manifold $M$ to be  the Euclidean space $\r^{n+1}$ and 
 $\mathcal B$ will be  either a half-space or a slab. In this case, 
 Ric $\equiv 0$ and II$\equiv 0$.
 
\subsection{Some formulae for hypersurfaces in Euclidean spaces} 

We gather here some general  facts about hypersurfaces in  Euclidean spaces that we will use in the sequel. 
The first proposition contains well known formulae, see for instance \cite{barbosa-do carmo}. 
 \begin{proposition}\label{prop:equations}

Let $\psi:\Sigma\to \r^{n+1}, n\geq 2,$ be a $\mathcal C^2$-immersion in the Euclidean space $\r^{n+1}$ of a smooth orientable $n-$dimensional manifold  $\Sigma$, possibly with boundary, $N:\Sigma\to\s^n\subset\r^{n+1}$ a global unit normal of $\psi$,
 $H$ its mean curvature and $\sigma$ its second fundamental form.  Denote by $\Delta$ and div, respectively  the Laplacian and divergence operators for the metric induced by $\psi$. 
Then the following equations hold on $\Sigma$
\begin{enumerate}[\sf (i)]
\item $\Delta \psi = nH \,N$,
\item  div (${\psi}-\langle \psi,N\rangle N) = n +nH\langle \psi,N\rangle,$
\item for any constant vector field $\vec a$ on $\r^{n+1},$ div $({\vec a}-\langle \vec a, N\rangle N)= n H \langle \vec a, N\rangle$. 

\noindent Moreover, if the mean curvature $H$ is constant, then 
\item $\Delta \langle \psi, N\rangle + |\sigma|^2 \langle \psi, N\rangle = -nH,$
\item $\Delta N +|\sigma|^2 N = \vec 0.$
\end{enumerate}
\end{proposition}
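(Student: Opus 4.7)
The unifying strategy is to work in a local orthonormal frame $\{e_1,\ldots,e_n\}$ on $\Sigma$ geodesic at the point $p$ under consideration, and to freely convert between the ambient flat connection $D$ of $\r^{n+1}$ and the induced connection $\nabla$ on $\Sigma$ using the Gauss relation $D_XY=\nabla_XY+\sigma(X,Y)N$ and the Weingarten relation $D_XN=-A(X)$, where $A$ denotes the shape operator. Both $\psi$ and $N$ are treated as $\r^{n+1}$-valued functions on $\Sigma$, so $\Delta$ acts on them componentwise.

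For (i), at the geodesic frame $\Delta\psi=\sum_i D_{e_i}e_i$, whose normal component gives $\sum_i\sigma(e_i,e_i)N=nHN$, while the tangential part vanishes. For (iii), differentiating the tangential projection $\vec a^T=\vec a-\langle\vec a,N\rangle N$ of a constant field and using Weingarten yields $D_{e_i}\vec a^T = \langle\vec a,A(e_i)\rangle N+\langle\vec a,N\rangle A(e_i)$; pairing with $e_i$ and summing produces $nH\langle\vec a,N\rangle$. Identity (ii) is the analogous computation applied to $\psi^T$, where one picks up an additional term $\sum_i\langle D_{e_i}\psi,e_i\rangle=n$ because $D_{e_i}\psi=e_i$.

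The substantive step is (v). Expanding $\Delta N=\sum_i D_{e_i}D_{e_i}N=-\sum_i D_{e_i}A(e_i)$ at a geodesic frame and decomposing into normal and tangential parts, the normal component is $-\sum_i\sigma(e_i,A(e_i))N=-|\sigma|^2 N$, while the tangential component is $-\sum_i(\nabla_{e_i}A)(e_i)$. By the Codazzi equation in the flat ambient, $\nabla A$ is totally symmetric, and tracing yields $\sum_i(\nabla_{e_i}A)(e_i)=n\nabla H$, which vanishes when $H$ is constant. Once (i) and (v) are in hand, (iv) follows from the vector Leibniz identity $\Delta\langle\psi,N\rangle=\langle\Delta\psi,N\rangle+\langle\psi,\Delta N\rangle+2\sum_i\langle D_{e_i}\psi,D_{e_i}N\rangle$, substituting (i), (v), and the cross term $2\sum_i\langle e_i,-A(e_i)\rangle=-2nH$.

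The only nontrivial point is the tangential-part calculation in (v), which relies on Codazzi to produce $n\nabla H$; everything else is routine bookkeeping in the frame, and I anticipate no further obstacles.
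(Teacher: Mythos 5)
Your computation is correct and is exactly the standard frame calculation that the paper has in mind: the authors give no proof at all, simply labelling these as well-known formulae and citing Barbosa--do Carmo, and your derivation (Gauss/Weingarten bookkeeping for (i)--(iii), Codazzi plus the trace identity $\sum_i(\nabla_{e_i}A)(e_i)=n\nabla H$ for (v), and the Leibniz rule for (iv)) matches the classical argument with the paper's sign conventions $\sigma(X,Y)=\langle A(X),Y\rangle$, $nH=\operatorname{tr}\sigma$. The only point worth a parenthetical is that (iv)--(v) differentiate the shape operator, hence need more than the stated $\mathcal C^2$ regularity; this is harmless because constant mean curvature forces $\psi$ to be real-analytic in the interior by elliptic regularity.
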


A second result we will need  is the following useful fact of independent interest.

\begin{proposition}\label{prop:normalintegral}
Let $\psi:\Sigma\to \r^{n+1}$ be a  $\mathcal C^1$-immersion in the Euclidean space $\r^{n+1}$ of a smooth compact orientable $n-$dimensional manifold $\Sigma$, possibly with boundary. Let $N:\Sigma \to \s^n\subset\r^{n+1}$ be a global unit normal of  $\psi$ and $\nu$ the unit outward conormal to $\partial\Sigma$ in $\Sigma$. Then 
\begin{equation}\label{eq:normal}
n\int_{\Sigma} N \, d\Sigma=  \int_{\partial\Sigma} \{ \langle \psi,\nu\rangle N - \langle \psi,N\rangle \nu\} \,d(\partial\Sigma)
\end{equation}
where $d\Sigma$ and $d(\partial\Sigma)$ denote the volume elements  of $\Sigma$ and $\partial\Sigma,$ respectively.

In particular, if $\Sigma$ has no boundary, then
\begin{equation*}
\int_{\Sigma} N \, d\Sigma= \vec{0}.
\end{equation*}

\end{proposition}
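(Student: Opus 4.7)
My plan is to reduce the vector identity \eqref{eq:normal} to a scalar identity by pairing it with an arbitrary constant vector $\vec{a}\in\r^{n+1}$, and then to deduce the scalar version from the intrinsic divergence theorem on $\Sigma$ applied to a well-chosen tangent vector field. The key object is
\[
Y_{\vec{a}}:=\langle N,\vec{a}\rangle\,\psi-\langle \psi,N\rangle\,\vec{a},
\]
defined along $\Sigma$. Pairing with $N$ shows $\langle Y_{\vec{a}},N\rangle\equiv 0$, so $Y_{\vec{a}}$ is tangent to $\Sigma$, and since $\nu$ is also tangent one has
\[
\langle Y_{\vec{a}},\nu\rangle=\langle N,\vec{a}\rangle\langle \psi,\nu\rangle-\langle \psi,N\rangle\langle \vec{a},\nu\rangle=\bigl\langle \vec{a},\,\langle \psi,\nu\rangle N-\langle \psi,N\rangle\nu\bigr\rangle,
\]
which is precisely $\vec{a}$ contracted into the boundary integrand of \eqref{eq:normal}.

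Next I would compute $\mathrm{div}_\Sigma Y_{\vec{a}}$ in a local orthonormal frame $\{e_1,\ldots,e_n\}$ using the ambient Euclidean connection $D$ together with the elementary identities $D_{e_i}\psi=e_i$, $D_{e_i}\vec{a}=0$, and the fact that $D_{e_i}N$ is tangent to $\Sigma$. Setting $h_{ij}:=\langle D_{e_i}N,e_j\rangle$, a short calculation (exploiting $\langle e_i,N\rangle=0$) yields
\[
\mathrm{div}_\Sigma Y_{\vec{a}}=\sum_{i,j}h_{ij}\langle \vec{a},e_j\rangle\langle \psi,e_i\rangle+n\langle N,\vec{a}\rangle-\sum_{i,j}h_{ij}\langle \psi,e_j\rangle\langle \vec{a},e_i\rangle.
\]
The first and third sums coincide by the symmetry $h_{ij}=h_{ji}$ of the Weingarten operator and therefore cancel, leaving $\mathrm{div}_\Sigma Y_{\vec{a}}=n\langle N,\vec{a}\rangle$.

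The final step is Stokes' theorem on $\Sigma$:
\[
n\int_{\Sigma}\langle N,\vec{a}\rangle\,d\Sigma=\int_{\partial\Sigma}\langle Y_{\vec{a}},\nu\rangle\,d(\partial\Sigma)=\Bigl\langle \vec{a},\,\int_{\partial\Sigma}\bigl[\langle \psi,\nu\rangle N-\langle \psi,N\rangle\nu\bigr]\,d(\partial\Sigma)\Bigr\rangle.
\]
Since $\vec{a}\in\r^{n+1}$ is arbitrary, this is equivalent to the vector identity \eqref{eq:normal}, and the closed case is the immediate specialization to $\partial\Sigma=\emptyset$. There is no real obstacle to the argument: the whole proof is driven by the choice of $Y_{\vec{a}}$, which is essentially forced by the two requirements that it be tangent to $\Sigma$ and that its boundary flux reproduce the scalar form of the target integrand; once that choice is made, the self-adjointness of the shape operator delivers the required cancellation automatically.
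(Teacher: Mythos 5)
Your argument is correct and is essentially the paper's own proof: your field $Y_{\vec a}$ coincides with the paper's $X=\langle\vec a,N\rangle\psi^{T}-\langle\psi,N\rangle\vec a^{T}$ (the normal components of the two terms cancel), and both arguments conclude by computing $\mathrm{div}_\Sigma$ of this tangent field, cancelling the second-fundamental-form terms via the symmetry of the shape operator, and applying the divergence theorem with an arbitrary constant vector $\vec a$. The only point you omit is that the statement is asserted for $\mathcal C^{1}$ immersions, where $D_{e_i}N$ need not exist; as in the paper, one proves the identity for smooth immersions and obtains the general case by approximation.
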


\begin{proof}
We prove the result for smooth immersions, the general case follows by approximation. Let $\vec{a}$ be a constant vector field on $\r^{n+1}.$  Consider the following vector field on $\Sigma $
$$ X=  \langle \vec a, N\rangle \psi^T -  \langle \psi,N\rangle {\vec a}^T, $$
where,  $\psi^{T}={\psi}-\langle \psi,N\rangle N$  (resp. ${\vec a}^T= {\vec a}-\langle \vec a, N\rangle N$ ) is the projection of $\psi$ (resp. of $\vec a$) on $T\Sigma$. 
Using  (ii) and (iii)  of Propositions \ref{prop:equations} and denoting by $D$ the usual differentiation in $\r^{n+1}$, we compute the divergence of $X$:
\begin{align*} {\text {div}}\, X&= \langle \vec a, N\rangle \,{\text {div}}\, \psi^T + \langle \vec a, D_{\psi^T} N\rangle - \langle \psi, N\rangle \text{div}\, {\vec a}^T - \langle {\vec a}^T, N\rangle -\langle \psi, D_{{\vec a}^T} N\rangle \\
&=   n  \langle \vec a, N\rangle + nH\langle \psi,N\rangle \langle \vec a, N\rangle  +  \langle {\vec a}^T, D_{\psi^T} N\rangle-n H
\langle {\vec a}, N\rangle \langle \psi,N\rangle  -\langle \psi^T, D_{{\vec a}^T} N\rangle \\
&= n\langle \vec a, N\rangle,
\end{align*}
 where we used that $\langle {\vec a}^T, D_{\psi^T} N\rangle=\langle \psi^T, D_{{\vec a}^T} N\rangle=- \sigma({\vec a}^T, \psi^T),$ 
 $\sigma$ being, as above,  the second fundamental form of the immersion. Integrating on $\Sigma$ and using the divergence theorem we get
 \begin{equation*}
 n\int_{\Sigma}\langle  \vec a, N\rangle \, d\Sigma=  \int_{\partial\Sigma}  \{ \langle \vec a, N\rangle\langle \psi,\nu\rangle  - \langle \psi,N\rangle \langle\vec a, \nu \rangle\} d(\partial\Sigma).
  \end{equation*}
 Since this is true for any $\vec a$,   (\ref{eq:normal}) follows.
\end{proof}


{\section{Stable capillary surfaces  of genus zero in  a slab  in $\r^3$}\label{sec:proof}

We deal in this section with  stable capillary surfaces in a slab in $\r^3.$ In the free boundary case, that is, when  the angle of contact 
is $\theta=\pi/2$, it was proved by Ros that the surface has to be a right circular cylinder. This follows from the results in \cite{ros}. For general values of $\theta$, we will show that in the genus zero case, a stable capillary surface in a slab has to be a surface of revolution. In particular, the capillary annuli constructed by Wente \cite{We}  are unstable. The result is true in the more general case where the contact angles $\theta_1$ and $\theta_2$ 
with the 2 planes $\Pi_1$ and $\Pi_2$ bounding the slab, are not necessarily equal. Although we do not explicitely state it below, we do not need to assume the surfaces are contained in the slab, only the assumption on the boundary is relevant. The stability of embedded capillary of revolution connecting 2 parallel planes was studied by Vogel \cite{vogel2} and Zhou \cite{zhou}.

\begin{theorem}\label{th:stable genus 0}
Let $\psi$ be a   capillary immersion of a surface $\Sigma$ of genus zero in  a slab of $\r^3$ bounded by 2 parallel planes $\Pi_1$ and $\Pi_2$ and having constant contact angles $\theta_1$ and $\theta_2$ with  $\Pi_1$ and $\Pi_2,$ respectively. 

If $\psi$ is stable, then 
$\psi(\Sigma)$ is a surface of revolution around an axis orthogonal to $\Pi_1$.
\end{theorem}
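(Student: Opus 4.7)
The natural strategy combines three ingredients: (i) Jacobi functions built from Killing fields of the slab, (ii) the conformal structure and Hopf differential available because $\Sigma$ has genus zero, and (iii) the stability inequality.

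First, I would construct test functions from slab symmetries. The Killing fields of $\r^3$ preserving $\mathcal B$ are the horizontal translations $e_1, e_2$ and the rotations $e_3\times(\psi - p)$ about vertical axes $p+\r e_3$. For each such $X$, the function $\varphi_X := \langle X, N\rangle$ is a Jacobi field in the capillary sense: it satisfies $\Delta\varphi_X + |\sigma|^2 \varphi_X = 0$ on $\Sigma$ and $\partial_\nu \varphi_X = q\,\varphi_X$ on $\partial\Sigma$. The interior equation is Proposition~\ref{prop:equations}(v) when $X$ is constant, and follows from parts (i) and (v) plus a short computation when $X = e_3\times\psi$. The Robin boundary condition uses Lemma~\ref{lem:umbilicity} (the conormal $\nu$ is a principal direction since the planes $\Pi_j$ are totally umbilical) together with the observation that, on $\partial\mathcal B$, $\bar N$ is vertical whereas $X$ is horizontal. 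One thus obtains the three-parameter family $f_1, f_2, g$, with $f_i := \langle N, e_i\rangle$ and $g := \langle e_3\times\psi, N\rangle$.

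Next I would evaluate the index form on this family. Integration by parts using the Jacobi identities gives $\mathcal I(f_i, f_j) = \mathcal I(f_i, g) = \mathcal I(g, g) = 0$ and $\mathcal I(f_i, 1) = \mathcal I(g, 1) = 0$. Consequently, for any linear combination $u = a_1 f_1 + a_2 f_2 + a_3 g - C$ with $C := |\Sigma|^{-1}\int_\Sigma (a_1 f_1 + a_2 f_2 + a_3 g)\,d\Sigma$ (so $u\in\mathcal F$), one finds
\[
\mathcal I(u, u) \;=\; C^2\,\mathcal I(1, 1) \;=\; -C^2\,\Bigl(\int_\Sigma |\sigma|^2\,d\Sigma + \int_{\partial\Sigma} q\,d(\partial\Sigma)\Bigr).
\]
Stability forces this to be nonnegative, which yields a sign constraint on $\alpha := \int_\Sigma |\sigma|^2 + \int_{\partial\Sigma} q$. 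In particular, whenever $\alpha > 0$, every such combination must have $C=0$; taking $(a_1,a_2,a_3)$ arbitrary this forces $\int_\Sigma f_1 = \int_\Sigma f_2 = \int_\Sigma g = 0$, so that $f_1, f_2, g$ themselves lie in $\mathcal F$ and hence in the nullspace of $\mathcal I$ on $\mathcal F$.

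Finally I would invoke the genus-zero hypothesis via conformal uniformization. The surface $\Sigma$ is conformally a planar domain $\Omega$ with $k\geq 2$ boundary circles, and the Hopf differential $Q(z)\,dz^2$ of $\psi$ is holomorphic on $\Omega$. By Lemma~\ref{lem:umbilicity} it takes real values on $\partial\Omega$, so in the annular case ($k=2$) a Laurent expansion forces $Q\,dz^2 = c\,dz^2/z^2$ with $c\in\r$, i.e.\ $Q$ is constant in the cylindrical coordinate $w=\log z$; for $k\geq 3$ the admissible $Q$'s span a larger finite-dimensional real space. The goal is then to exhibit $p\in\Pi_1$ with $g_p := \langle e_3\times(\psi-p), N\rangle\equiv 0$, which is equivalent to $g\in\mathrm{span}(f_1,f_2)$ and to $\psi(\Sigma)$ being rotationally invariant around the vertical axis through $p$. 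The plan is to feed the sign information from stability, together with a Hersch-type balancing argument (choosing $p$ to normalize the relevant integrals, and using the rotational automorphisms of the annulus model when $k=2$), into the Hopf-differential analysis so as to force this linear relation. The main obstacle is precisely this last translation: passing from ``the index form vanishes on the three-dimensional space of Killing-Jacobi functions'' to a genuine linear dependence among them, which requires a careful interplay between the Hopf-differential constraint and the capillary boundary geometry, particularly when $k\geq 3$.
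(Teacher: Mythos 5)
Your strategy diverges from the paper's and, as written, it does not close. Two concrete gaps. First, the index-form computation on the Killing--Jacobi family produces no information: since each $\varphi_X$ satisfies $\Delta\varphi_X+|\sigma|^2\varphi_X=0$ and $\partial_\nu\varphi_X=q\,\varphi_X$, integration by parts gives $\mathcal I(\varphi_X,h)=0$ for \emph{every} $h\in H^1(\Sigma)$, so the conclusion that $f_1,f_2,g$ ``lie in the nullspace of $\mathcal I$'' is automatic and independent of stability. Moreover the identity $\mathcal I(u,u)=C^2\,\mathcal I(1,1)$ only constrains something when $C\neq 0$, stability says nothing about $\mathcal I(1,1)$ itself (the constant $1$ is not in $\mathcal F$), and the case $\alpha\leq 0$ is left untreated; so this stage yields neither a sign on $\alpha$ nor the vanishing of $\int_\Sigma g$. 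Second, and decisively, the passage from this (vacuous) nullity statement to the linear relation $g\in\mathrm{span}(f_1,f_2)$ via the Hopf differential is exactly the content of the theorem, and you acknowledge it is not carried out. Note also that the reality of the Hopf differential on $\partial\Omega$ and the normal form $Q\,dz^2=c\,dz^2/z^2$ on an annulus do not by themselves force rotational invariance (non-rotational capillary annuli in a slab exist, by Wente), so stability must enter the endgame in an essential way that your outline does not supply.

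The paper's proof uses stability through a nodal-domain argument rather than a balancing one. Fix a boundary component $\gamma$ with $\psi(\gamma)\subset\Pi_1$ and let $\mathcal C$ be the circle circumscribed about $\psi(\gamma)$ in $\Pi_1$; place its center at the origin and take $u=\langle\psi\wedge e_3,N\rangle$, the Jacobi function of the rotation about the vertical axis through that center. The point of this choice of axis is quantitative: $\mathcal C$ meets $\psi(\gamma)$ in at least two points (Osserman), and a point of $\gamma$ closest to the origin gives a third critical point of $|\psi|^2|_\gamma$, so $u$ has at least three zeros on $\gamma$. The Robin condition forces $\partial_\nu u=0$ at each such zero, and the Hopf boundary point lemma plus unique continuation then force $u$ to change sign near each of them. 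Since $\Sigma$ has genus zero, the Jordan curve theorem yields at least three nodal domains; gluing two of them with a constant $\alpha$ into a mean-zero test function $\widetilde u$ gives $\mathcal I(\widetilde u,\widetilde u)=0$, so by stability $\widetilde u$ is a Jacobi function vanishing on a nonempty open set, contradicting unique continuation unless $u\equiv 0$. To repair your proposal you would need, at minimum, to supply the missing Hopf-differential endgame; the circumscribed-circle/nodal-count mechanism is the idea your outline lacks.
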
 

\begin{proof}
Up to isometries and a homothety of $\r^3,$  we may suppose the slab is bounded by the horizontal planes 
 $\Pi_1=\{x_3=0\}$ and $\Pi_2=\{x_3=1\}.$ Let  $\gamma$  be a connected component of  $\partial\Sigma$  such that $\psi(\gamma)$  lies on  the plane $\{x_3=0\}$ and consider in this plane  the  circumscribed circle
$\mathcal C$ about $\psi(\gamma)$.  We will show that $\psi(\Sigma)$ is a surface of revolution around the vertical axis passing through the center of $\mathcal C.$ 

We may assume that the center of $\mathcal C$ coincides with the origin. Let us  consider the  Jacobi function $u$ on $\Sigma$ induced by 
the rotations around the $x_3-$axis. More precisely, for $p\in \Sigma, u(p)=\langle \psi(p)\wedge e_3, N(p)\rangle,$ where $\wedge$ denotes the cross product on $\r^3.$ The function $u$ verifies
\begin{equation}\label{rotation}
\begin{cases}  \Delta u +|\sigma|^2 u = 0 \,\,\,\,\quad {\text {on}}\quad \Sigma\\
\, \,\,\,\quad \quad\quad\frac{\partial u}{\partial \nu} = q\, u \quad {\text{on}}\quad \partial \Sigma
 \end{cases}
\end{equation}

We will prove  that $u\equiv 0$ on $\Sigma.$ 

Suppose, by contradiction, $u$ is not identically zero. Then its nodal set $u^{-1}(0)$ in the interior of $\Sigma$ has the structure of a graph (cf. \cite{cheng}). Recall that a nodal domain of $u$ is a connected component of $\Sigma\setminus u^{-1}(0)$.
We will show that $u$ has at least 3 nodal domains by analyzing the set   $u^{-1}(0)\cap\gamma$.

We first note  that, because of the boundary condition satisfied by $u,$ if $p\in u^{-1}(0)\cap \partial\Sigma$ then
$\frac{\partial u}{\partial \nu}(p)=0.$ It follows that $u$ has to change sign in any neighborhood of $p\in u^{-1}(0)\cap \partial\Sigma.$  Indeed, otherwise, as
 $\Delta u=-|\sigma|^2 u,$ by the strong maximum principle (see \cite{gilbarg-trudinger}, Theorem 3.5 and  Lemma 3.4),
at such a point $p,$ we would have $\frac{\partial u}{\partial \nu}(p)\neq 0,$  unless $u$ is identically zero in a neighborhood of $p,$ but then, by the unique continuation principle of Aronszajn, $u$ would vanish everywhere on $\Sigma,$ contradicting our assumption. It follows that each such point  
lies on the boundary of at least 2 components of the  set $\{u\neq 0\}.$

If $p\in \partial\Sigma$ is a critical point of the function $|\psi|^2$ restricted to $\gamma$, then one can check that $u(p)=0$.
Now, we observe that by the choice of $\mathcal C,$ there are at least 3 points in  $u^{-1}(0)\cap \gamma.$ Indeed, it is a  known fact that $\mathcal C$ contains at least two points of $\psi(\gamma)$, see \cite{osserman}. This gives rise to 2 points in $u^{-1}(0)\cap \gamma.$ A third one is a point of $\gamma$ whose image by $\psi$ is a closest one to the origin. 

 Since by hypothesis $\Sigma$ is topologically a planar domain, using the above information and the Jordan curve theorem, it is easy to see this implies that $u$ has at least 3 nodal domains.

 Denote by $\Sigma_1$ and $\Sigma_2$ two of these components and consider the following function in the Sobolev space $H^1(\Sigma)$:
 \begin{equation*}
\widetilde u ={ \begin{cases}
\quad u \,\,\,\,\,\quad{\text{on}}\quad \Sigma_1\\
\alpha \, u \quad\quad {\text{on}}\quad \Sigma_2\\
\quad 0\qquad {\text{on}}\quad \Sigma\setminus (\Sigma_1\cup \Sigma_2)
 \end{cases}}
\end{equation*} 
 where $\alpha\in \r$ is chosen so that $\int_{\Sigma}\widetilde u\,dA =0.$
 Using (\ref{rotation}) we compute
 \begin{align*}
 \int_{\Sigma_1}\{ \langle \nabla\widetilde u, \nabla\widetilde u\rangle -|\sigma|^2 {\widetilde u}^2 \}dA
 &= \int_{\Sigma_1}\{ \langle  \nabla u, \widetilde\nabla u\rangle -|\sigma|^2 \,u{\widetilde u}\}dA\\
 &= -\int_{\Sigma_1} ( \Delta u+ |\sigma|^2 u) \widetilde u \,dA+\int_{\partial\Sigma_1} \widetilde u\frac{\partial u}{\partial \nu}\,ds\\
 &= \int_{{\partial \Sigma_1}\cap \partial\Sigma} q {\widetilde u}^2 \,ds
  \end{align*}
  Using a similar computation on $\Sigma_2,$ we deduce that 
  $\mathcal  I(\widetilde u,\widetilde u)=0.$
 As $\Sigma$ is stable, we conclude that  $\widetilde u$ is a Jacobi function. Indeed, the quadratic form on $\mathcal F$ associated to $\mathcal I$ has a minimum at $\widetilde u$ and so $\widetilde u$ lies in the kernel of $\mathcal I.$ However, $\widetilde u$ vanishes on a 
  non empty open set. By the unique continuation principle of Aronszjan, $\widetilde u$ has to vanish everywhere, which is a contradiction. 
  
  Therefore $u\equiv 0.$ This means that $\psi(\Sigma)$ is a surface of revolution around the $x_3-$axis.
 \end{proof}


{\section{Stable CMC hypersurfaces  with free boundary in a slab  in $\r^{n+1}$}
Stability of embedded rotationally invariant CMC hypersurfaces connecting 2 parallel hyperplanes in $\r^{n+1}$ and orthogonal 
to them was studied by Athanassenas \cite{athanassenas}  and Vogel \cite{vogel} for $n=2$, and by Pedrosa and Ritor{\'e} for any $n\geq 2$ \cite{pedrosa-ritore}. It turns out that for $2\leq n\leq 7$ only circular cylinders can be stable. However for $n\geq 9$ there are 
certain unduloids which are stable \cite{pedrosa-ritore}. 

We study here stability of general immersed capillary hypersurfaces. Under a mild condition on the immersion along the boundary, we show that a stable hypersurface  has to be embedded and rotationally invariant around an axis orthogonal to the hyperplanes bounding the slab. Our proof is inspired by some ideas used in \cite{ros-vergasta}. When $n=2,$ the conclusion is valid without any extra condition.  This follows from the results in \cite{ros}. We believe the assumption on the boundary behaviour of the immersion can also be removed for all $n\geq 3.$ We note also that our proof does not use the fact that the hypersurfaces are contained in the slab. Clearly, without loss of generality, the slab can be assumed to be horizontal. More precisely, our result is as follows:

\begin{theorem}\label{thm:free boundary} Let $\psi:\Sigma\to \r^{n+1},\, n\geq 2,$ be an immersed capillary hypersurface connecting  two horizontal hyperplanes in $\r^{n+1}$ with  contact angle $\theta = {\pi}/{2}.$ Suppose that the restriction of $\psi$ to each component of $\partial\Sigma$ is an embedding. 

 If $\psi$ is stable then $\psi(\Sigma)$ is either a circular vertical cylinder or  a vertical graph which is rotationally invariant  around a vertical axis. 
\end{theorem}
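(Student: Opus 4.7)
The plan is to analyze the Gauss-map height coordinate
\[
\phi := \langle N, e_{n+1}\rangle = N_{n+1}
\]
after placing the slab between $\{x_{n+1}=0\}$ and $\{x_{n+1}=1\}$. By Proposition~\ref{prop:equations}\,(v), $\Delta\phi+|\sigma|^2\phi=0$ on $\Sigma$, and the free-boundary condition $\theta=\pi/2$ forces $N$ to be horizontal along $\partial\Sigma$, so $\phi\equiv 0$ there. Moreover, because $\partial\mathcal B$ is a union of hyperplanes (so $\mathrm{II}\equiv 0$) and $\cot\theta=0$, the stability boundary coefficient $q$ vanishes identically; in particular the index form reduces to $\mathcal I(f,f)=\int_\Sigma(|\nabla f|^2-|\sigma|^2 f^2)\,d\Sigma$.

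The crux of the argument will be to prove that $\phi$ cannot change sign on $\Sigma$. If it did, I would pick a connected component $\Sigma_+$ of $\{\phi>0\}$ and a component $\Sigma_-$ of $\{\phi<0\}$ and adapt the nodal test-function trick from the proof of Theorem~\ref{th:stable genus 0}: set $f=\phi$ on $\Sigma_+$, $f=\alpha\phi$ on $\Sigma_-$ and $f=0$ on the remainder, choosing $\alpha$ so that $\int_\Sigma f\,d\Sigma=0$. Integration by parts on each $\Sigma_{\pm}$, using the Jacobi equation satisfied by $\phi$ and the vanishing of $\phi$ both on the Dirichlet part of $\partial\Sigma$ and on the interior nodal set, produces $\mathcal I(f,f)=0$. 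Stability then forces $f\in\ker\mathcal I\vert_{\mathcal F}$, and standard elliptic regularity makes $f$ smooth with $\partial_\nu f=0$ on $\partial\Sigma$. Combined with $f\vert_{\partial\Sigma}=0$ inherited from the Dirichlet condition on $\phi$, Aronszajn's unique continuation principle applied from the boundary gives $f\equiv 0$, contradicting $f=\phi\not\equiv 0$ on $\Sigma_+$.

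Once $\phi$ is known to have constant sign, two cases arise. If $\phi\equiv 0$, then $N$ is horizontal everywhere and $e_{n+1}$ is tangent to $\Sigma$ at each point, so $\psi(\Sigma)=\gamma_0\times[0,1]$ for some closed CMC hypersurface $\gamma_0\subset\r^n$. Embeddedness of any single boundary component transfers to $\gamma_0$, and Alexandrov's theorem in $\r^n$ (valid for every $n\ge 2$) forces $\gamma_0$ to be a round sphere, so $\psi(\Sigma)$ is a right circular cylinder. Otherwise $\phi>0$ in the interior (after reorienting), the vertical projection $\pi\colon\Sigma\to\r^n$ is a local diffeomorphism on $\mathrm{int}\,\Sigma$, and one checks that $\psi$ is a vertical graph over a domain of $\r^n$; in particular $\psi$ is embedded, so the Alexandrov moving-planes method applied with vertical hyperplanes, in the spirit of \cite{wente}, yields rotational invariance about a vertical axis.

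The main obstacle will lie in the fine handling of Step~2 when $\phi$ has only two nodal domains, i.e. when $\Sigma_+\cup\Sigma_-$ already exhausts $\Sigma$ modulo a measure-zero nodal set. In that situation the $C^\infty$ regularity of any element of $\ker\mathcal I\vert_{\mathcal F}$ forces the derivatives of $f$ to match across the nodal set, hence $\alpha=1$; this in turn forces $\int_\Sigma\phi=0$, after which $f=\phi$ itself must satisfy the Dirichlet--Neumann pair $\phi=\partial_\nu\phi=0$ on $\partial\Sigma$, and unique continuation closes the argument. A secondary technical point will be to promote ``$\phi>0$ in the interior'' into ``$\psi(\Sigma)$ is an embedded vertical graph''; this is where the hypothesis that each component of $\partial\Sigma$ be embedded really enters, beyond its role of reducing $\gamma_0$ to a sphere in the cylinder case.
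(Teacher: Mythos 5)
Your proposal is correct and follows essentially the same route as the paper: test the index form with a mean-zero combination of the positive and negative parts of $v=\langle N,e_{n+1}\rangle$, use stability plus the Jacobi characterization and unique continuation to rule out a sign change, and then conclude cylinder (via Alexandrov's theorem on the base) or global vertical graph (via the embedded-boundary hypothesis and a projection/covering argument) followed by Alexandrov reflection. The only cosmetic differences are that the paper works with the global parts $v_{\pm}$ rather than single nodal components and justifies the vanishing of the Jacobi field near $\partial\Sigma$ by analytic reflection through the bounding hyperplanes and Cauchy--Kowalevski uniqueness, where you invoke boundary unique continuation directly.
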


\begin{proof}
Call  $\Pi_1$ and $\Pi_2$ the hyperplanes bounding the slab with $\Pi_1$ below $\Pi_2$.  We denote, as usual, by  $e_1,\ldots,e_{n+1}$  the vectors of the canonical basis of $\r^{n+1}$. 
We consider the function $v=\langle N, e_{n+1}\rangle,$ that is, the $(n+1)$-coordinate function of $N.$   If $v$ is identically zero then $\Sigma$ is a vertical cylinder whose base is an   embedded CMC hypersurface in 
$\Pi_1.$ The base is, by Alexandrov's theroem, a round sphere in $\Pi_1$. Consequently, $\Sigma$ is a circular cylinder.

 Assume $v$ is not identically zero. We will show it has a sign in the interior of $\Sigma.$
    Suppose by contradiction  that $v$ changes sign and  consider the functions  $ v_+=\text{max} \{v,0\} $ and
 $v_-= \text{min} \{v,0\}$ which lie in the Sobolev space $H^1(\Sigma).$   We compute, using the fact that $v=0$ on $\partial\Sigma$
 \begin{align*}
 \mathcal I(v_+,v_+)&= \int_{\Sigma} \{\langle \nabla v_+,\nabla v_+\rangle -|\sigma|^2 (v_+)^2\} \,d\Sigma\\
                   &= \int_{\Sigma} \{\langle \nabla v,\nabla v_+\rangle -|\sigma|^2 v v_+\} \,d\Sigma\\
                   &= -\int_{\Sigma} \{(\Delta v +|\sigma|^2 v) v_+\} \,d\Sigma\\
                   &=0
  \end{align*}
 and similarily  $\mathcal I(v_-,v_-)=0$. 
 As we supposed that $v$ changes sign, there exists $a\in \r$ such that $\int_{\Sigma} (v_+ + a v_-)\, d\Sigma =0.$ So we can use  $\widetilde v:= v_+ + a v_-$ as a test function in the second variation. We have
  \begin{equation*}
 \mathcal I(\widetilde v, \widetilde v) =\mathcal  I(v_+,v_+)+2a\mathcal I(v_+,v_-) + a^2 \mathcal I(v_-,v_-)=0.
 \end{equation*}
 Since $\Sigma$ is stable, we conclude that $\widetilde v$ is a Jacobi function and so satisfies $\frac{\partial{\widetilde v}}
 {\partial \nu} = 0$ on $\partial\Sigma.$ Note that $\psi(\Sigma)$ extends analytically by reflection through the hyperplanes $\Pi_1$ and $\Pi_2.$ Since we also have $\widetilde v= 0$ on $\partial\Sigma$, by the uniqueness part in the Cauchy-Kowalevski theorem the function $\widetilde v$ vanishes in a neighborhood of $\partial\Sigma,$ i.e $v$ vanishes in a neighborhood of $\partial\Sigma.$ This means $\psi(\Sigma)$ is a cylinder in a neighborhood of  $\psi(\partial\Sigma)$
 and, by analyticity of CMC hypersurfaces, $\psi(\Sigma)$ is a vertical cylinder and so $v$ is identically zero, a contradiction. 
 
  Therefore the function $v$ does not change sign in $\Sigma$. We will assume $v\geq 0,$ the case $v\leq 0$ being similar. The function $v$ satisfies
 \begin{align*}
v&\geq 0,\\
\Delta v&=-|\sigma|^2 v \leq 0,\\
 v&=0\quad {\text{on}}\quad\partial \Sigma.
\end{align*}
 As we are assuming $v$ is not identically zero, by the maximum principle for superharmonic functions we know that $v>0$ on the interior of $\Sigma.$ So the interior of $\psi(\Sigma)$ is a local vertical graph. We will show $\psi( \Sigma) $ is globally a vertical graph by analyzing its behavior near its boundary. Let $\Gamma_1,\ldots, \Gamma_k$ denote the connected components of $\partial\Sigma.$ By hypothesis, $\psi$ restricted to $\Gamma_i$ is an embedding and so 
 $\psi(\Gamma_i)$ separates  the hyperplane containing it, among $\Pi_1$ and $\Pi_2,$  into two connected components, for each $i=1,\ldots,k.$   
 
Denote by $P: \r^{n+1}\to \Pi_1$ the orthogonal projection and set  $F=P\circ \psi.$  Fix $i=1,\ldots,k$ and consider a point 
 $p\in \Gamma_i$, and a curve
  $\gamma:(-\epsilon,0]\to \Sigma$ parametrized by arc-length  with $\gamma(0)=p$ and $\gamma^\prime(0)=\nu(p)$. It is easy to check that
 \begin{align*}
 \frac{d}{dt}\langle F(\gamma(t))-F(p),N(p)\rangle|_0&=0\\
 \frac{d^2}{dt^2}\langle F(\gamma(t)-F(p),N(p)\rangle|_0&=\langle\frac{D}{dt}\psi(\gamma^\prime)|_0,N(p)\rangle\\
 &=\sigma(\nu,\nu)
  \end{align*}
  Note that 
  $$\frac{\partial v}{\partial \nu}=-\sigma(\nu,\nu)\langle\nu,e_{n+1}\rangle=
  \begin{cases}+\sigma(\nu,\nu)\quad {\text{if}}\quad \psi(\Gamma_i)\subset\Pi_1\\
-\sigma(\nu,\nu)\quad {\text{if}} \quad \psi(\Gamma_i)\subset\Pi_2
\end{cases}
$$
By the strong maximum principle, $\frac{\partial v}{\partial \nu}<0$ on $\partial\Sigma.$
So for $t$ small, $F(\gamma(t))$ lies in the component of $\Pi_1\setminus F(\Gamma_i)$ which has $N(p)$ 
as outwards (resp. inwards) pointing normal at $F(p)$ if $\psi(\Gamma_i)\subset\Pi_1$ (resp. if $\psi(\Gamma_i)\subset\Pi_2$). It follows that there is a thin strip in the interior of $\Sigma$ around $\Gamma_i$ which projects 
on this component. We call $D_i$ the component of  $\Pi_1\setminus F(\Gamma_i)$ which does not intersect this projection.
We define $\widetilde \Sigma$ to be the union of $\Sigma$ with the disjoint union of all the domains $D_i$, and 
$\widetilde F: \widetilde \Sigma \to \Pi_1$ by
\begin{equation*}
\widetilde F =\begin{cases} F\qquad &{\text{on}}\quad \Sigma\\
{\text{projection on}}\, \Pi_1\quad &{\text{on}}\quad D_i, i=1,\ldots,k.
\end{cases}
\end{equation*}

It is clear that $\widetilde F$ is a local homeomorphism and a proper map, thus it is a covering map. Therefore
$\widetilde F$ is a global homeomorphism onto $\Pi_1.$ So $\psi(\Sigma)$ is a graph over a domain in 
$\Pi_1$ and it is, in particular, embedded.  Alexandrov's reflection technique shows that $\psi(\Sigma)$ is a
hypersurface of revolution around a vertical axis, see \cite{wente}.
\end{proof}

{\section{Stable capillary hypersurfaces  in a half-space   in $\r^{n+1}$}

We focus in this section on  capillary immersions into a half-space in $\r^{n+1}$. Spherical caps are examples of such immersions and are actually the only embedded ones \cite{wente}. It is known they are stable and even minimize the energy functional, see \cite{gonzalez et al}.   Marinov  \cite{marinov} characterized the spherical caps as the only stable immersed capillary surfaces in a half-space in $\r^3$ with embedded boundary.  He utilized an infinitesimal admissible variation which is the version in the capillarity setting of the one  used, in the closed case, by Barbosa and do Carmo \cite{barbosa-do carmo}. Recently, Choe and Koiso \cite{choe-koiso} proved the same result in $\r^{n+1}$, for any $n\geq 2$, assuming the  contact  angle  is $\geq \pi/2$ and  the boundary of the hypersurface  is convex.  To achieve this, they computed the second variation of an admissible volume-preserving variation which is the integrated variation of the 
infinitesimal one used by Marinov.  

We  deal here the case where the angle of contact  is $\leq\pi/2.$ We recall that we orient our immersions by their mean curvature vector $\vec H$ and the angle of contact is the one between  $\vec H$  and the exterior unit normal to the boundary of the half-space (note that  by the maximum principle $H\neq 0$).  We will use two infinitesimal variations, the first one being the one used by Marinov,
Choe and Koiso. However, contrarily to the previous authors, thanks to  Proposition \ref{prop:normalintegral}}, to establish that this variation is volume-preserving,  we do not need to assume embeddedness of the boundary. The second infinitesimal variation we use is a suitable combination of the negative and positive parts of the last coordinate of the Gauss map. 
 Our result is the following:

\begin{theorem}\label{thm:halfspace} Let $\psi:\Sigma\to \r^{n+1},\, n\geq 2,$ be a stable  immersed capillary hypersurface in a half-space in $\r^{n+1}$ with  contact angle $0<\theta \leq {\pi}/{2}.$ 

\begin{enumerate}[\sf (i)]
\item If $\theta=\pi/2,$ then $\psi(\Sigma)$ is a hemisphere.
\item  If $\theta<\pi/2$ and the restriction of $\psi$ to each component of $\partial\Sigma$ is an embedding, then 
$\psi(\Sigma)$ 
is a spherical cap.
\end{enumerate}
\end{theorem}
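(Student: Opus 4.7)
My plan is to follow the two-variation strategy announced at the beginning of the section. Identify $\partial\mathcal B=\{x_{n+1}=0\}$ with outer unit normal $\bar N=e_{n+1}$, orient $\Sigma$ by $\vec H$ (so $H>0$), and set $h=\langle\psi,\bar N\rangle$, $v=\langle N,\bar N\rangle$. The capillary condition gives $h=0$ and $v=\cos\theta$ on $\partial\Sigma$; Lemma \ref{lem:umbilicity} yields $\sigma(\nu,X)=\sigma(\nu,\nu)\langle\nu,X\rangle$ there; and, since $\text{II}\equiv 0$ on $\partial\mathcal B$, the function $q$ simplifies to $\cot\theta\,\sigma(\nu,\nu)$.

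The first admissible variation is produced by the Marinov--Choe--Koiso test function
\[
\phi_1=1+H\langle\psi-p_0,N\rangle,\qquad p_0=\tfrac{\cos\theta}{H}\,e_{n+1},
\]
which vanishes identically on every spherical cap of contact angle $\theta$. Three properties need to be verified: (a) the interior identity $\Delta\phi_1+|\sigma|^2\phi_1=|\sigma|^2-nH^2$, which is Proposition \ref{prop:equations}(iv); (b) the Robin identity $\partial\phi_1/\partial\nu=q\phi_1$ on $\partial\Sigma$, where using Lemma \ref{lem:umbilicity} the difference $\partial\phi_1/\partial\nu-q\phi_1$ reduces algebraically to $\sigma(\nu,\nu)(H\langle p_0,\bar N\rangle-\cos\theta)/\sin\theta$ and vanishes thanks to the choice $\langle p_0,\bar N\rangle=\cos\theta/H$; (c) admissibility $\int_\Sigma\phi_1\,d\Sigma=0$, which follows by combining the Minkowski-type relation $|\Sigma|+H\int_\Sigma\langle\psi,N\rangle=\cos\theta\int_\Sigma v$ obtained by integrating Proposition \ref{prop:equations}(ii), with the vector identity $n\int_\Sigma N=\bigl(\int_\Sigma v\bigr)\,e_{n+1}$ that is a direct consequence of Proposition \ref{prop:normalintegral} applied with $\vec a=\bar N$. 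The crucial point, emphasized in the paragraph introducing this section, is that this vector identity does \emph{not} require embeddedness of $\partial\Sigma$, which is what permits us to drop that hypothesis. Integration by parts then collapses the index form to
\[
\mathcal I(\phi_1,\phi_1)=-\int_\Sigma \phi_1(|\sigma|^2-nH^2)\,d\Sigma,
\]
in which $|\sigma|^2-nH^2\ge 0$ pointwise by Cauchy--Schwarz.

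The second admissible variation is a suitable combination of the positive and negative parts of $v$, in the spirit of the proof of Theorem \ref{thm:free boundary}. Its role is to establish that $v$ has a constant sign on $\Sigma$, which will then be promoted to the sign of $\phi_1$. In case (i) ($\theta=\pi/2$), $v|_{\partial\Sigma}=0$ and $q=0$, so the argument of Theorem \ref{thm:free boundary} applies verbatim: if $v$ changed sign, $\widetilde v=v_++\alpha v_-$ with $\int_\Sigma\widetilde v=0$ would be a Jacobi function satisfying $\widetilde v=0=\partial\widetilde v/\partial\nu$ on $\partial\Sigma$, forcing $\widetilde v\equiv 0$ by Cauchy--Kowalevski and analyticity, whence $v\equiv 0$ and $\psi(\Sigma)$ would be a vertical cylinder, impossible for a compact $\Sigma$ with $H>0$. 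In case (ii) ($\theta<\pi/2$), a more delicate variant of the argument is needed because $v=\cos\theta>0$ on $\partial\Sigma$: the naive test $\widetilde v=v_++\alpha v_-$ leaves a boundary term $-\cot\theta\int_{\partial\Sigma}\sigma(\nu,\nu)$ in $\mathcal I(\widetilde v,\widetilde v)$ that has to be controlled by combining stability with an independent identity for $\int_{\partial\Sigma}\sigma(\nu,\nu)$ (coming from integrating $\Delta v+|\sigma|^2 v=0$) or by modifying the test function appropriately, in order to extract a sign for $v$.

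To finish, the sign of $v$ is propagated to the sign of $\phi_1$: in case (ii), the embeddedness of each component of $\partial\Sigma$ allows one to run the end of the proof of Theorem \ref{thm:free boundary}, showing that $\psi(\Sigma)$ is a graph over a domain in $\partial\mathcal B$, and thus $\phi_1$ inherits a definite sign; in case (i), $p_0=0\in\partial\mathcal B$ and Barbosa--do Carmo's enclosing-ball argument, applied to the closed CMC hypersurface obtained from $\Sigma$ by reflection across $\partial\mathcal B$, yields a sign of $\phi_1$. Once $\phi_1$ has a definite sign, stability and the above formula for $\mathcal I(\phi_1,\phi_1)$ force $\phi_1(|\sigma|^2-nH^2)\equiv 0$; analyticity of CMC hypersurfaces then gives $|\sigma|^2\equiv nH^2$, so $\psi(\Sigma)$ is totally umbilic---a hemisphere in case (i) and a spherical cap in case (ii). In my view, the most delicate step is the design of the second test function in case (ii) and the propagation of the sign of $v$ to the sign of $\phi_1$, both of which rely on the embeddedness assumption on $\partial\Sigma$; a secondary technical crux is the algebraic cancellation in the Robin identity that singles out the value $p_0=(\cos\theta/H)e_{n+1}$.
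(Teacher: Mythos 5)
Your first test function is the right one --- up to your sign conventions, $\phi_1=1+H\langle\psi,N\rangle+\cos\theta\,\langle N,e_{n+1}\rangle$ is exactly the $\phi$ the paper uses --- and you correctly identify the two points that make it admissible: the Robin identity $\partial\phi_1/\partial\nu=q\,\phi_1$ and the vanishing of $\int_\Sigma\phi_1$ obtained by combining the integrated Minkowski formula with Proposition \ref{prop:normalintegral} (this is indeed where embeddedness of $\partial\Sigma$ is \emph{not} needed; note only that your displayed vector identity should read $n\int_\Sigma N\,d\Sigma=-\tfrac{1}{\cos\theta}\bigl(\int_{\partial\Sigma}\langle\psi,\nu\rangle\bigr)e_{n+1}$, i.e.\ $\int_\Sigma N\,d\Sigma$ is parallel to $e_{n+1}$; as literally written it is off by a factor of $n$). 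The choice of second test function $w=v_-+\alpha v_+$ also matches the paper.

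The proof is, however, incomplete precisely at the step you flag as ``to be controlled,'' and that step is the heart of case (ii). The paper does not stop at $\mathcal I(\phi,\phi)=-\int_\Sigma\phi(|\sigma|^2-nH^2)$: it pushes on to evaluate the boundary contribution explicitly, using $\sigma(\nu,\nu)=nH+(n-1)\sin\theta\,H_{\partial\Sigma}$ and the balancing identity $\sin\theta\,\mathrm{vol}_{n-1}(\partial\Sigma)=H\int_{\partial\Sigma}\langle\psi,\bar\nu\rangle\,d(\partial\Sigma)$ (itself a consequence of Proposition \ref{prop:equations}(i) and Proposition \ref{prop:normalintegral}), arriving at $\mathcal I(\phi,\phi)=-\int_\Sigma(|\sigma|^2-nH^2)\,d\Sigma+(n-1)\sin\theta\cos\theta\,[\,H\,\mathrm{vol}_{n-1}(\partial\Sigma)+\sin\theta\int_{\partial\Sigma}H_{\partial\Sigma}\,]$. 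The point is that the awkward term $\int_{\partial\Sigma}H_{\partial\Sigma}$ cancels \emph{exactly} against the corresponding term in $\sin^2\theta\,\mathcal I(w,w)$, so that stability applied to $\mathcal I(\phi,\phi)+\sin^2\theta\,\mathcal I(w,w)$ yields $-\int_\Sigma(|\sigma|^2-nH^2)-\sin\theta\cos\theta\,H\,\mathrm{vol}_{n-1}(\partial\Sigma)\geq 0$, which is impossible for $\theta<\pi/2$, $H>0$. Neither of the alternatives you suggest (integrating $\Delta v+|\sigma|^2v=0$ gives only $\sin\theta\int_{\partial\Sigma}\sigma(\nu,\nu)=-\int_\Sigma|\sigma|^2v$, which has no useful sign) supplies this cancellation. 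Your endgame is also problematic: since $\int_\Sigma\phi_1=0$, the function $\phi_1$ can have a definite sign only if $\phi_1\equiv 0$, which already means $\Sigma$ is umbilic --- so ``propagating the sign of $v$ to a sign of $\phi_1$'' is circular and cannot be carried out for a non-umbilic competitor. The paper avoids this entirely: in case (i) the boundary term above carries the factor $\cos\theta=0$, so $\mathcal I(\phi,\phi)=-\int_\Sigma(|\sigma|^2-nH^2)\leq 0$ and stability forces umbilicity directly; in case (ii), once $v$ has a sign one concludes via the local-graph/covering argument of Theorem \ref{thm:free boundary} and Alexandrov reflection, with no further use of $\phi_1$.
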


\begin{proof}

Without loss of generality, we may suppose the half-space is the upper half-space  $ x_{n+1}\geq 0.$

 Integrating the equation in (ii) of Proposition \ref{prop:equations},  we get
\begin{equation}\label{eq1}
\int_{\partial \Sigma} \langle \psi,\nu\rangle {d}( \partial\Sigma) = n\int_{\Sigma} \{1+H\langle \psi,N\rangle\}d\Sigma.
\end{equation} 
  
On $\partial\Sigma$, we have:  $\, \cos\theta\, N+\sin\theta\, \nu=-e_{n+1},$ where $e_{n+1}$ is the $(n+1)$-th vector of the canonical basis of $\r^{n+1}$. Therefore, 
Proposition \ref {prop:normalintegral} gives

  \begin{equation}\label{eq:normalintegral}
 n\int_{\Sigma} N\, d\Sigma = -\frac{1}{\cos\theta}\left( \int_{\partial\Sigma} \langle \psi,\nu\rangle\, \,d(\partial\Sigma)\right)\, e_{n+1}.
  \end{equation}

  From (\ref{eq1}) and (\ref{eq:normalintegral}), we conclude that:
\begin{equation*}
\int_{\Sigma} \{1+H\langle \psi, N\rangle +\cos\theta \langle N,e_{n+1}\rangle\} \,d\Sigma =0.
\end{equation*}
So we may use  $\phi:=1+H\langle \psi, N\rangle +\cos\theta \langle N,e_{n+1}\rangle$ as a test  function in the stability inequality. Set $u=\langle \psi,N\rangle$ and $v=\langle N,e_{n+1}\rangle.$ From Proposition \ref{prop:equations} we know that:
\begin{equation}\label{eq3}
\Delta u +|\sigma|^2 u =-nH,
\end{equation}
and 
\begin{equation}\label{eq4}
\Delta v+ |\sigma|^2 v =0.
\end{equation}
Using these equations, we  compute:
\begin{align*}
\Delta \phi&= H(-nH-|\sigma|^2 u)+\cos\theta \, |\sigma|^2 v\\
&=-nH^2-|\sigma|^2(H u-\cos\theta  v)\\
&=-nH^2-|\sigma|^2(\phi-1).
\end{align*}
Therefore
\begin{equation*}
\phi\Delta\phi+ |\sigma|^2\phi^2 =(|\sigma|^2-nH^2)\phi.
\end{equation*}
On the other hand:
\begin{equation*}
\frac{\partial u}{\partial \nu}= \langle \nu, N\rangle + \langle \psi, D_{\nu} N\rangle=-\sigma(\nu,\nu)\langle \psi,\nu\rangle\\
\end{equation*}
and 
\begin{equation*} 
\frac{\partial v}{\partial \nu}= \langle D_{\nu}N, e_{n+1}\rangle=-\sigma(\nu,\nu)\langle\nu,e_{n+1}\rangle=\sigma(\nu,\nu)\sin\theta.
\end{equation*}
Using the relation: $-e_{n+1}= \cos\theta \,N+\sin\theta\,\nu,$  one can check  after direct computations that:
\begin{equation}\label{eq:normalderivative}
\frac{\partial \phi}{\partial \nu}= \cot\theta\, \sigma(\nu,\nu) \,\phi.
\end{equation}
It follows that
\begin{align}\label{eq7}
\mathcal I(\phi,\phi)=&-\int_{\Sigma} (|\sigma|^2-nH^2)\phi\, d\Sigma.
\end{align}
Integrating (\ref{eq3}) we have
\begin{equation*}
\int_{\partial\Sigma} \frac{\partial u}{\partial \nu}  \,d(\partial\Sigma) + \int_{\Sigma}|\sigma|^2 u \,d\Sigma = -nH\int_{\Sigma} 1 \,d\Sigma.
\end{equation*}

Using this, we can write
\begin{align*} \int_{\Sigma} (|\sigma|^2-nH^2)\phi\, d\Sigma =& \int_{\Sigma} (|\sigma|^2-nH^2)\,d\Sigma
+ H \int_{\Sigma} |\sigma|^2 u \,d\Sigma -nH^2\int_{\Sigma} H u \,d\Sigma\\
&+\cos\theta \int_{\Sigma} (|\sigma|^2-nH^2)v \,d\Sigma\\
=&\int_{\Sigma} (|\sigma|^2-nH^2)\,d\Sigma -nH^2\int_{\Sigma} \left(1+H u\right) \,d\Sigma -H\int_{\Sigma}
\frac{\partial u }{\partial \nu} \ \,d\Sigma\\
& +\cos\theta \int_{\Sigma} (|\sigma|^2-nH^2) v \,d\Sigma\\
=&\int_{\Sigma} (|\sigma|^2-nH^2)\,d\Sigma +nH^2\cos\theta\int_{\Sigma} v\,d\Sigma - H\int_{\partial\Sigma}\frac{\partial u}{\partial \nu} \,d(\partial \Sigma)\\
& +\cos\theta \int_{\Sigma} (|\sigma|^2-nH^2) v \,d\Sigma\\
=&\int_{\Sigma} (|\sigma|^2-nH^2)\,d\Sigma +\cos\theta \int_{\Sigma} |\sigma|^2 v \,d\Sigma
- H\int_{\partial\Sigma}\frac{\partial u}{\partial \nu} \,d(\partial \Sigma).
\end{align*}
Integrating  (\ref{eq4}) we obtain 
\begin{equation*}
\int_{\Sigma} |\sigma|^2 v\,d\Sigma=-\int_{\partial\Sigma} \frac{\partial v}{\partial \nu} \,d({\partial\Sigma}).
\end{equation*}
So,
\begin{equation*}
\int_{\Sigma} (|\sigma|^2-nH^2)\phi\, d\Sigma =\int_{\Sigma} (|\sigma|^2-nH^2)\,d\Sigma -\int_{\partial\Sigma}
\frac{\partial}{\partial \nu}\left(Hu+\cos\theta\, v\right)\,d\Sigma,
\end{equation*}
that is,
\begin{equation}\label{eq8}
 \int_{\Sigma} (|\sigma|^2-nH^2)\phi\, d\Sigma=\int_{\Sigma} (|\sigma|^2-nH^2)\,d\Sigma -\int_{\partial\Sigma}
\frac{\partial\phi}{\partial \nu}\,d(\partial\Sigma).
\end{equation}

Let $i\in\{1,\ldots ,k\}$ and $\{v_1,\ldots,v_{n-1}\}$ be a local orthonormal frame on  $\partial\Sigma.$ Then:
\begin{equation*} 
\sigma(\nu,\nu)=nH-\sum_{j=1}^{n-1} \sigma(v_j,v_j).
\end{equation*}
Now, considering the unit normal $\bar\nu$ in $\r^n\times\{0\}$ to  $\partial\Sigma$ along $\psi$, as chosen in Sec.\ \ref{sub:variational}, we have $ N=-\sin\theta\,\bar\nu - \cos\theta \, e_{n+1}.$ We can thus write 
\begin{equation*}\sigma(v_j,v_j)=-\langle \nabla_{v_j} N, v_j\rangle=\sin\theta\, \langle \nabla_{v_j} \bar\nu, v_j\rangle.
\end{equation*}
Therefore, if we denote by $H_{\partial\Sigma}$ the mean curvature of $\partial\Sigma$ in $\r^n\times\{0\}$ computed with respect to the unit normal 
$\bar\nu,$ the following  relation holds on $\partial\Sigma$
\begin{equation}\label{eq:sigma}
\sigma(\nu,\nu)=nH+(n-1)\sin\theta\, H_{\partial\Sigma}.
\end{equation}
Also, taking into account that $\nu=\cos\theta\,\bar\nu-\sin\theta\,e_{n+1}$, one has on $\partial\Sigma$
\begin{equation*}
\phi=1-\sin\theta\, H \langle\psi,\bar\nu\rangle -\cos^2\theta=\sin^2\theta - \sin\theta\, H\langle\psi,\bar\nu\rangle.
\end{equation*}
Replacing in (\ref{eq:normalderivative}), we obtain:
\begin{equation*}
\frac{\partial \phi}{\partial \nu}=\cos\theta\{ nH\sin\theta -n H^2\langle \psi,\bar\nu\rangle +(n-1)\sin^2\theta H_{\partial\Sigma}\, -(n-1)\sin\theta\,H_{\partial\Sigma}\ H\,\langle \psi,\bar\nu\rangle\}
\end{equation*}
and so, using   (ii) of Proposition \ref{prop:equations} applied to the immersion $\psi_{|\partial\Sigma}:\partial\Sigma\to\r^n\times\{0\}$, we get
\begin{multline}\label{eq:integralderiv}
 \int_{\partial\Sigma} \frac{\partial \phi}{\partial \nu}\,d(\partial\Sigma) =nH\cos\theta\left[\sin\theta\, \text{vol}_{n-1}(\partial\Sigma) -H\int_{\partial\Sigma} \langle \psi, \bar\nu\rangle\,d(\partial\Sigma)\right]\\
 +(n-1)\cos\theta\sin\theta\left[ H \, \text{vol}_{n-1}(\partial\Sigma)
  +\sin\theta \int_{\partial\Sigma}H_{\partial\Sigma}\,d(\partial\Sigma)\right].
\end{multline}
Integrating equation (i)  of Proposition \ref{prop:equations}, we get:
\begin{equation*}
\int_{\partial\Sigma} \nu\, d(\partial\Sigma)= nH\int_{\Sigma} N\,d\Sigma.
\end{equation*}
Therefore
\begin{equation*}
\int_{\partial\Sigma} \langle \nu,e_{n+1}\rangle \, d(\partial\Sigma)= nH\int_{\Sigma}\langle  N,e_{n+1}\rangle\,d\Sigma,
\end{equation*}
that is,
\begin{equation}\label{eq5}
-\sin\theta\,\text{vol}_{n-1}(\partial\Sigma)= nH\int_{\Sigma} \langle  N,e_{n+1}\rangle\,d\Sigma.
\end{equation}
Combining (\ref{eq5}), (\ref{eq:normalintegral}) and taking into account the relation $\langle \psi,\nu\rangle=\cos\theta\langle \psi, \bar\nu\rangle$, we conclude that
\begin{equation*}
\sin\theta\, \text{vol}_{n-1}(\partial\Sigma) =H\int_{\partial\Sigma} \langle \psi, \bar\nu\rangle\,d(\partial\Sigma).
\end{equation*}
Therefore (\ref{eq:integralderiv}) becomes
\begin{equation*}
 \int_{\partial\Sigma} \frac{\partial \phi}{\partial \nu}\,d(\partial\Sigma) =(n-1)\sin\theta\cos\theta\left[H\,{\text{vol}_{n-1}} (\partial\Sigma)+
  \sin\theta  \int_{\partial\Sigma}H_{\partial\Sigma} \, d(\partial\Sigma) \right].
\end{equation*}
Combining this with  (\ref{eq7}) and (\ref{eq8}), we finally obtain 
\begin{multline}\label{eq:I of phi}
\mathcal I(\phi,\phi)=-\int_{\Sigma}\{ |\sigma|^2-nH^2\}d\Sigma\\ +
(n-1)\sin\theta\cos\theta\left[H\,{\text{vol}_{n-1}} (\partial\Sigma)+
  \sin\theta \int_{\partial\Sigma}H_{\partial\Sigma} \, d(\partial\Sigma) \right].
\end{multline}
By stability $\mathcal I(\phi,\phi)\geq 0$. In particular, if $\theta =\pi/2$, since $|\sigma|^2\geq nH^2$, we conclude that necessarily $|\sigma|^2=nH^2$ everywhere on $\Sigma$. So the immersion is totally umbilical, that is, $\psi(\Sigma)$ is a hemisphere. This proves (i).

To prove (ii), we will first show that the function   $v=\langle N, e_{n+1}\rangle$, does not change sign on $\Sigma$. Suppose, by contradiction, this is not the case. Then, we can find a  number $\alpha\in \r$ so that the function $w:=v_{-}+\alpha v_{+}$ satisfies
$\int_{\Sigma} w\, d\Sigma = 0.$
Under the hypothesis $\theta <\frac{\pi}{2},$ we have $w=v_{-}=-\cos\theta$ on $\partial\Sigma.$ 
Proceeding as in the proof of Theorem \ref{thm:free boundary} and using that $\frac{\partial v}{\partial\nu}=\sin\theta\,\sigma(\nu,\nu)$, we find that
\begin{align*}
\mathcal I(v_+,v_+)&=\mathcal I(v_-,v_+)=0\\
\mathcal I(v_-,v_-)&= \int_{\partial\Sigma} \left(\frac{\partial v}{\partial\nu}-\cot\theta\,\sigma(\nu,\nu)  v\right)v\,d(\partial\Sigma)\\
                &=-\cot\theta\int_{\partial\Sigma}\sigma(\nu,\nu)\,d(\partial\Sigma).\\
\end{align*}
 Taking into account (\ref{eq:sigma}), we get
 \begin{align*}\label{I of w}
\mathcal I(w,w)&=\mathcal I(v_-,v_-)+2\alpha \mathcal I(v_-,v_+)+\alpha^2 \mathcal I(v_+,v_+)\\
&= -nH\, \cot\theta\, {\text{vol}_{n-1}}(\partial\Sigma) -(n-1) \cos\theta\int_{\partial\Sigma} H_{\partial\Sigma} \, d(\partial\Sigma).
\end{align*}
By stability 
\begin{equation*}
\mathcal I(\phi,\phi)+\sin^2\theta\, \mathcal I(w,w)\geq 0.
\end{equation*}
Taking into account (\ref{eq:I of phi}), we conclude that 
\begin{equation*}
-\int_{\Sigma}\left[ |\sigma|^2-nH^2\right]\,d\Sigma -\sin\theta\cos\theta\, H\,{\text{vol}_{n-1}}(\partial\Sigma)\geq 0.
\end{equation*}
As $|\sigma|^2\geq nH^2,$ $\theta <\pi/2$ and $H> 0$ by our choice of orientation, this is a contradiction (recall that  $H\neq 0$  because of the maximum principle ). We conclude that  the function $v$ does not change  sign on $\Sigma.$ Since $ v=-\cos\theta<0$ on $\partial\Sigma,$ we must have $v\leq0$ everywhere on  $\Sigma.$  
As $\Delta v=-|\sigma|^2 v\geq 0,$  by the Hopf maximum principle we have $v<0$ everywhere.  So $\psi(\Sigma)$ is a local vertical graph around each of its  points, including the boundary ones.  We can now conclude, as in the proof of Theorem \ref{thm:free boundary}, that $\psi(\Sigma)$ is globally a graph over a domain in 
$\r^n\times\{0\}$, hence it is embedded and a spherical cap by Alexandrov's reflection argument \cite{wente}.
\end{proof}

\begin{remark} 

The case $\theta =\pi/2$ can be treated more directly. Indeed, by  reflection  through the boundary hyperplane, the hypersurface gives rise to a closed CMC hypersurface which can be shown to be stable for volume-preserving variations and which is therefore a round sphere by the result of Barbosa and do Carmo \cite{barbosa-do carmo}. 
\end{remark}

{\bf Acknowledgments.} {\small 
This work was carried out during the first  author's visit at the G{\'e}om{\'e}trie et Dynamique group of IMJ-PRG-Universit{\'e} Paris Diderot. He is grateful  to  this group for its hospitality.}



\begin{thebibliography}{12}


\bibitem{alarcon-souam}  A. Alarc\'{o}n and R. Souam.:  {\em Capillary surfaces inside polyhedral regions.}   arXiv:1401.6935. 

\bibitem{athanassenas} M. Athanassenas.: {\em 
A variational problem for constant mean curvature surfaces with free boundary. }
J. Reine Angew. Math. {\bf 377} (1987), 97--107. 

\bibitem{barbosa-do carmo} J-L. Barbosa and M. do Carmo.: {\em  Stability of hypersurfaces with constant mean curvature.}  Math. Z. {\bf 185} (1984), no. 3, 339--353.

\bibitem{barbosa et al} J-L. Barbosa; M.  do Carmo and J. Eschenburg.:  {\em 
 Stability of hypersurfaces of constant mean
curvature in Riemannian manifolds. } Math. Z.  {\bf 197} 
(1988),  no. 1, 123--138.


\bibitem{cheng} S. Y. Cheng.:
 {\em  Eigenfunctions and nodal sets.}
Comment. Math. Helv.  {\bf 51}  (1976), no. 1, 43--55.


\bibitem{choe-koiso} J. Choe and  M. Koiso.:  {\em Stable capillary hypersurfaces in a wedge.}  arXiv:1405.5407. 



\bibitem{Fi} R. Finn.:  {\em Equilibrium capillary surfaces. }  Grundlehren der Mathematischen Wissenschaften, 284. Springer--Verlag, New York, 1986. 

\bibitem{fraser-schoen} A. Fraser and R. Schoen.: {\em Sharp eigenvalue bounds and minimal surfaces in the ball. } arXiv:1209.3789. 


\bibitem{gilbarg-trudinger} D. Gilbarg and N. Trudinger.:  {\em Elliptic partial differential equations of second order. }Reprint of the 1998 edition. Classics in Mathematics. Springer-Verlag, Berlin, 2001.


\bibitem{gonzalez et al} E. Gonzalez,  U.  Massari, U and I. Tamanini.: {\em 
Existence and regularity for the problem of a pendent liquid drop. } 
Pacific J. Math. {\bf 88} (1980), no. 2, 399Ð420. 

\bibitem{langbein} D. Langbein.:  {\em Capillary Surfaces. Shape--stability--dynamics, in particular under weightlessness. }Springer--Verlag, Berlin, 2002.

\bibitem{li-xiong} H. Li and C. Xiong.: {\em Stability of capillary hypersurfaces in a Euclidean ball.} arXiv:1408.2086.

 
 
\bibitem{marinov} P. Marinov.:  {\em Stability of capillary surfaces with planar boundary in the absence of gravity.}  Pacific J. Math. {\bf 255} (2012), no. 1, 177--190.


\bibitem{Mc} J. McCuan.:  {\em  Symmetry via spherical reflection and spanning drops in a wedge.} Pacific J. Math. {\bf 180} (1997), no. 2, 291--323.

\bibitem{nitsche}  J.C.C. Nitsche.:  {\em  Stationary partitioning of convex bodies. } Arch. Rational Mech. Anal. {\bf 89} (1985), no. 1, 1Ð19. 


\bibitem{osserman} R. Osserman.:  {\em Circumscribed circles. } Amer. Math. Monthly {\bf 98}  (1991), no. 5, 419--422. 

\bibitem{Pa} S. -h. Park.:  {\em Every ring type spanner in a wedge is spherical. } Math. Ann. {\bf 332} (2005), no. 3, 475--482.

\bibitem{pedrosa-ritore} R. Pedrosa and M. Ritor{\'e}.:  {\em Isoperimetric domains in the Riemannian product of a circle with a simply connected space form and applications to free boundary problems. }
Indiana Univ. Math. J. {\bf 48 }(1999), no. 4, 1357--1394.

\bibitem{ros1} A. Ros.: {\em Stability of minimal and constant mean curvature surfaces with free boundary.} Mat. Contemp. {\bf 35} (2008), 221--240. 

\bibitem{ros} A. Ros.: {\em Stable periodic constant mean curvature surfaces and mesoscopic phase separation.} Interfaces Free Bound. {\bf 9} (2007), no. 3, 355--365. 

\bibitem{ros-vergasta} A. Ros and E. Vergasta.: {\em Stability for hypersurfaces of constant mean curvature with free boundary.} Geom. Dedicata {\bf 56}  (1995), no. 1, 19--33.

\bibitem{ros-souam} A. Ros and R. Souam.:  {\em On stability of capillary surfaces in a ball.}  
Pacific J. Math. {\bf 178} (1997), no. 2, 345--361.


\bibitem{souam} R. Souam.: {\em Schiffer's problem and an isoperimetric inequality for the first buckling eigenvalue of domains on $\s^2$.}  Ann. Global Anal. Geom. {\bf 27} (2005), no. 4, 341--354.


\bibitem{vogel} T. I.  Vogel.:  {\em Stability of a liquid drop trapped between two parallel planes.}  SIAM J. Appl. Math. {\bf 47} (1987), no. 3, 516--525.

\bibitem{vogel2} T. I. Vogel.: {\em Stability of a liquid drop trapped between two parallel planes. II. General contact angles.} SIAM J. Appl. Math. {\bf 49}  (1989), no. 4, 1009--1028.

\bibitem{wente} H. C. Wente.:  {\em The symmetry of sessile and pendent drops.}   Pacific J. Math. {\bf 88} (1980), no. 2, 387--397.

\bibitem{We} H. C. Wente.:  {\em Tubular capillary surfaces in a convex body.}  Advances in geometric analysis and continuum mechanics (Stanford, CA, 1993), 288--298, Int. Press, Cambridge, MA, 1995.

\bibitem{zhou} L. Zhou.:  {\em On stability of a catenoidal liquid bridge. }
Pacific J. Math. {\bf 178} (1997), no. 1, 185--198.

\end{thebibliography}
\end{document}